\documentclass{amsart}

\usepackage{amsmath}
\usepackage{amssymb}
\usepackage{amsthm}

\newcommand{\theoremref}[1]{\hyperref[#1]{Theorem~\ref*{#1}}}
\newcommand{\claimref}[1]{\hyperref[#1]{Claim~\ref*{#1}}}
\newcommand{\situationref}[1]{\hyperref[#1]{Situation~\ref*{#1}}}
\newcommand{\lemmaref}[1]{\hyperref[#1]{Lemma~\ref*{#1}}}
\newcommand{\definitionref}[1]{\hyperref[#1]{Definition~\ref*{#1}}}
\newcommand{\propositionref}[1]{\hyperref[#1]{Proposition~\ref*{#1}}}
\newcommand{\conjectureref}[1]{\hyperref[#1]{Conjecture~\ref*{#1}}}
\newcommand{\corollaryref}[1]{\hyperref[#1]{Corollary~\ref*{#1}}}
\newcommand{\exerciseref}[1]{\hyperref[#1]{Exercise~\ref*{#1}}}

\usepackage[mathscr]{euscript}

\usepackage[all]{xy}

\xyoption{2cell}
\UseAllTwocells


\usepackage{multicol}

\usepackage{xr-hyper}

\usepackage{hyperref}
\usepackage[usenames,dvipsnames]{xcolor}
\hypersetup{colorlinks=true,citecolor=NavyBlue,linkcolor=BrickRed,urlcolor=Orange}

%

\numberwithin{equation}{section}

\theoremstyle{plain}
\newtheorem{theorem}[equation]{Theorem}
\newtheorem{proposition}[equation]{Proposition}
\newtheorem{lemma}[equation]{Lemma}

\newtheorem{corollary}[equation]{Corollary}

\newtheorem{proposition/definition}[equation]{Proposition/Definition}

\theoremstyle{definition}

\theoremstyle{remark}
\newtheorem{remark}[equation]{Remark}

\newtheorem{notation}[equation]{Notation}


\usepackage{tikz}
\usepackage{tikz-cd}
\usetikzlibrary{matrix,arrows}
\usetikzlibrary{decorations.pathmorphing}

\DeclareMathOperator{\Pic}{Pic}

\DeclareMathOperator{\Sym}{Sym}

\usepackage{amscd}

\newcommand{\git}{\mathbin{
  \mathchoice{/\mkern-6mu/}
    {/\mkern-6mu/}
    {/\mkern-5mu/}
    {/\mkern-5mu/}}}

\def\Pic{\operatorname{Pic}}

\def\Hom{\operatorname{Hom}}

\def\Sym{\operatorname{Sym}}

\newcommand{\bP}{\mathbb{P}}

\newcommand{\bZ}{\mathbb{Z}}
\newcommand{\bQ}{\mathbb{Q}}
\newcommand{\bR}{\mathbb{R}}
\newcommand{\bC}{\mathbb{C}}

\newcommand{\Aut}{\mathrm{Aut}}

\newcommand{\Res}{\mathrm{Res}}

\newcommand{\id}{\mathrm{id}}

\newcommand{\SO}{\mathrm{SO}}

\newcommand{\SL}{\mathrm{SL}}

\newcommand{\PGL}{\mathrm{PGL}}

\newcommand{\calD}{\mathcal{D}} 

\newcommand{\calO}{\mathcal{O}}
\newcommand{\calL}{\mathcal{L}}

\newcommand{\calF}{\mathcal{F}}

\newcommand{\calT}{\mathcal{T}}  


\newcommand{\calM}{\mathcal{M}}
\newcommand{\calP}{\mathcal{P}}
\newcommand{\calU}{\mathcal{U}}



\begin{document}

\title[Special Horikawa surfaces]{A generic global Torelli theorem for certain Horikawa surfaces}
\author[G. Pearlstein]{Gregory Pearlstein}
\address{Department of Mathematics, Texas A\&M University, College Station, TX 77843-3368}
\email{gpearl@math.tamu.edu}
\author[Z. Zhang]{Zheng Zhang}
\address{Department of Mathematics, Texas A\&M University, College Station, TX 77843-3368}
\email{zzhang@math.tamu.edu}
\date{\today}
\thanks{The authors acknowledge partial support from NSF Grant DMS-1361120.}

\bibliographystyle{amsalpha}
\maketitle
 
\begin{abstract}
Algebraic surfaces of general type with $q=0$, $p_g=2$ and $K^2=1$ were described by Enriques \cite{Enriques_LeSA} and then studied in more detail by Horikawa \cite{Horikawa_II}. In this paper we consider a $16$-dimensional family of special Horikawa surfaces which are certain bidouble covers of $\bP^2$. The construction is motivated by that of special Kunev surfaces (cf. \cite{Kynev} \cite{Catanese_kunev} \cite{Catanese_torellifail} \cite{Todorov_kunev}) which are counterexamples for infinitesimal Torelli and generic global Torelli problem. The main result of the paper is a generic global Torelli theorem for special Horikawa surfaces. To prove the theorem, we relate the periods of special Horikawa surfaces to the periods of  certain lattice polarized $K3$ surfaces using eigenperiod maps (see \cite{DK_ball}) and then apply a Torelli type result proved by Laza \cite{Laza_n16}. 
\end{abstract}

\section*{Introduction}
There are two particular situations where the period map plays an essential role for studying moduli spaces, namely principally polarized abelian varieties and (lattice) polarized $K3$ surfaces. In these cases, the period domains are Hermitian symmetric domains and the period maps are both injective and dominant. It is an interesting problem to find more examples where the period maps are injective and the images lie in certain Mumford-Tate subdomains which are locally Hermitian symmetric (but the Griffiths infinitesimal period relations may be non-trivial on the ambient period domains). We mention the examples previously studied by Allcock, Carlson, and Toledo \cite{ACT2, ACT}, Kondo \cite{Kondo}, Borcea \cite{Borcea}, Voisin \cite{Voisin_cy}, Rohde \cite{Rohde}, Garbagnati and van Geemen (\cite{vGG_cyshimura}).

The general problem of determining whether the period map is injective is called the Torelli problem. There are four types of Torelli problem (we follow  the terminology of \cite{Catanese_torelli}): local Torelli (whether the differential of the period map is injective), infinitesimal Torelli (local Torelli for the semi-universal deformation), global Torelli (whether the period map is injective) and generic global Torelli (whether the period map is injective over a open dense subset). 

Various Torelli theorems have been proved for a large class of varieties (see for example \cite{Catanese_torelli}). However, Kunev \cite{Kynev} constructed a counterexample for the infinitesimal Torelli and generic global Torelli problem (see also \cite{Catanese_kunev} \cite{Catanese_torellifail} \cite{Todorov_kunev}). Let us briefly recall the construction. Let $C_1$, $C_2$ be two smooth plane cubic curves intersecting transversely and $L$ be a general line. Let $X$ be the $(\bZ/2\bZ)^2$-cover of $\bP^2$ branched along $C_1+C_2+L$. Then $X$ is a minimal algebraic surface with $q(X)=0$, $p_g(X)=1$ and $K_X^2=1$. Following \cite{Catanese_kunev} such surfaces $X$ are {\sl special Kunev surfaces} whose bicanonical maps are Galois covers of $\bP^2$. The infinitesimal period map and period map for special Kunev surfaces both have $2$-dimensional fibers (the rough reason is that the period map only sees the Hodge structures of the intermediate $K3$ surfaces obtained as the desingularizations of the double planes along $C_1+C_2$).

One may ask whether is it possible to modify the construction or the period map for Kunev surfaces to get Torelli. Usui \cite{Usui_vmhs} and Letizia \cite{Letizia} considered the complement of the canonical curve $\Lambda \subset X$ and the mixed Hodge structure $H^2(X-\Lambda)$, and proved infinitesimal mixed Torelli and generic global mixed Torelli for special Kunev surfaces $X$ respectively. Our idea is to modify the branch data (see also \cite[Def. 2.1]{Pardini_cover}) $C_1+C_2+L$. Specifically, we consider the $(\bZ/2\bZ)^2$-cover $S$ of $\bP^2$ along a smooth quintic $C$ together with two generic lines $L_0$ and $L_1$. The surfaces $S$ are minimal surfaces with $q(S)=0$, $p_g(S)=2$ and $K_S^2=1$ which have been studied by Horikawa \cite{Horikawa_II}. We shall call such bidouble covers $S$ {\sl special Horikawa surfaces}. We should remark that these surfaces $S$ are also mentioned in the recent preprint \cite{Garbagnati_doublecoverk3} by Garbagnati. But her perspective (classify the possible branch loci of a smooth double cover of a $K3$ surface) is quite different from ours. 

Let us explain why we want to modify the branch data in this way. On one hand, there are two (lattice polarized) $K3$ surfaces hidden in the construction of a special Horikawa surface $S$ (this is also observed in \cite{Garbagnati_doublecoverk3}). Namely, one resolves the singularities of the double cover of $\bP^2$ branched along $C+L_1$ (resp. $C+L_0$) and gets $X_0$ (resp. $X_1$) which is a $K3$ surface. On the other hand, it is also natural to study the action of the Galois group $(\bZ/2\bZ)^2$ on the periods of the bidouble covers $S$. We shall show that the eigenperiods (cf. \cite[\S7]{DK_ball}) of $S$ are determined by the Hodge structures of the $K3$ surfaces $X_0$ and $X_1$, and apply the global Torelli theorem for (lattice polarized) $K3$ surfaces (a modified version is needed, see the next paragraph) to prove a generic global Torelli theorem for special Horikawa surfaces.

By the work of Catanese \cite{Catanese_bidouble} and Pardini \cite{Pardini_cover} the isomorphism classes of the bidouble covers $S$ are determined by the branch data $C+L_0+L_1$. This can be used to construct the coarse moduli space $\calM$ for special Horikawa surfaces (the moduli of special Kunev surfaces has been constructed in a similar manner, see \cite{Usui_type1kunev}). It also follows that we need the global Torelli theorem \cite[Thm. 4.1]{Laza_n16} for degree $5$ pairs $(C,L)$ consisting of plane quintics $C$ and lines $L$ (up to projective equivalence). A key point is that one needs to choose a suitable arithmetic group as explained in \cite[Prop. 4.22]{Laza_n16}.  

A typical way to prove a generic global Torelli is to study the infinitesimal variation of Hodge structure and first prove a variational Torelli theorem (cf. \cite{CDT_variational}). Our approach is different. One advantage is that we are able to describe the open dense subset over which the period map is injective explicitly. We suspect that variational Torelli fails for special Horikawa surfaces (otherwise by op. cit. the global Torelli holds for any discrete group of the automorphism group of the period domain, as long as the period map is well-defined, which seems not true; see also \cite{Hayashi_variational}).

After ``labeling" the lines $L_0$ and $L_1$, we obtain a period map (using the period maps for the degree $5$ pairs $(C,L_0)$ and $(C, L_1)$) from (a double cover of) the moduli $\calM$ of special Horikawa surfaces $S$ to a product of two arithmetic quotients of Type IV domains. The period map is generically injective. Therefore, special Horikawa surfaces are along the lines of the examples mentioned at the beginning of the paper. In an ongoing project with Gallardo and Laza, we use this period map as our guide to compactify the moduli space of special Horikawa surfaces.

A few words on the structure of the paper. The construction of special Horikawa surfaces is given in Section \ref{sec_construction}. As is well-known (cf. \cite{Horikawa_II}) the canonical model of an algebraic surface with $q=0$, $p_g=2$ and $K^2=1$ is a degree $10$ hypersurface in the weighted projective space $\bP(1,1,2,5)$. We shall give the equations for (the canonical models of) special Horikawa surfaces and use Griffiths residue to study the decomposition of the Hodge structures. The infinitesimal Torelli problem will be discussed in Section \ref{sec_inftorelli}. Usui \cite{Usui_wci} has proved the infinitesimal Torelli theorem for nonsingular weighted complete intersections satisfying certain conditions which will be checked for special Horikawa surfaces. A second proof will also be included which can be viewed as a boundary case of \cite[Thm. 3.1]{Pardini_surface} or \cite[Thm. 4.2]{Pardini_torelli} and might be of independent interest. In Section \ref{sec_globaltorelli} we discuss the generic global Torelli problem for special Horikawa surfaces and prove the main result. 

\subsection*{Acknowledgement} The work is partly motivated by the recent project by Green, Griffiths, Laza and Robles on studying degenerations of ``H-surfaces" (which are of general type with $p_g=K^2=2$) using Hodge theory. We thank Griffiths for his interest in this paper. We also thank the referee for the valuable comments. Finally,  we are grateful to P. Gallardo and R. Laza for several useful discussions.

\section{Special Horikawa surfaces} \label{sec_construction}
Let $C$ be a smooth plane quintic curve. Let $L_0$, $L_1$ be two distinct lines which intersect $C$ transversely and satisfy that $C \cap L_0 \cap L_1 = \emptyset$. We are interested in the bidouble cover $S$ of $\bP^2$ branched along $C+L_0+L_1$. 

Specifically, the surface $S$ can be constructed in the following way.  Take the double cover $\bar{X}_0$ of $\bP^2$  branched along the sextic curve $C+ L_1$. The surface $\bar{X}_0$ is a singular $K3$ surface with  five $A_1$ singularities. Let $X_0$ be the $K3$ surface obtained by blowing up the singularities (i.e. take the canonical resolution of $\bar{X}_0$). Denote by $E_1, \cdots, E_5$ the exceptional curves on $X_0$ with self intersection $(-2)$. Set $D_0$ to be the preimage of $L_0$ in $X_0$ and let $D_1\subset X_0$ be the strict transform of $L_1$. Choose $\calL := \calO_{X_0}(D_1 + E_1 + \cdots + E_5)$. By computing the pull-back of $\calO_{\bP^2}(1)$ one sees that that $D_0 \sim 2D_1 + E_1 + \cdots + E_5$ and hence $\calL^{\otimes 2} = \calO_{X_0}(D_0 + E_1 + \cdots + E_5)$. Now we take the branched double cover $S_0$ of $X_0$ along $D_0 + E_1 + \cdots + E_5$. The exceptional curves $E_1, \cdots, E_5$ become $(-1)$-curves on $S_0$. Contract these $(-1)$-curves and one obtains a surface $S$. For later use let us denote by $\sigma_0$ the involution of $S$ so that $\bar{X}_0 = S/\sigma_0$. To summarize we have the following diagram (the left one). 
\begin{equation} \label{diagram}
\begin{tikzcd}
S_0 \arrow{r}{f_0} \arrow{d}{\varphi_0}
               &S \arrow{d}{\psi_0} \arrow[loop right]{}{\sigma_0}\\
X_0 \arrow{r}{g_0} \arrow{d}{\tau_0}
&\bar{X}_0 \arrow{d}{\delta_0} \\
\widetilde{\bP}^2 \arrow{r}{h_0} & \bP^2 
\end{tikzcd}
\hspace{1.5 cm}
\begin{tikzcd}
S \arrow[leftarrow]{r}{f_1} \arrow{d}{\psi_1} \arrow[loop left]{}{\sigma_1}
               &S_1 \arrow{d}{\varphi_1}\\
\bar{X}_1 \arrow[leftarrow]{r}{g_1} \arrow{d}{\delta_1}
&X_1 \arrow{d}{\tau_1} \\
{\bP^2} \arrow[leftarrow]{r}{h_1} & \widetilde{\bP}^2
\end{tikzcd}
\end{equation}

\begin{lemma}
The surface $\psi_0 \circ \delta_0: S \rightarrow \bP^2$ is a Galois cover with group $(\bZ/2\bZ)^2$.
\end{lemma}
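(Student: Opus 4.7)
The plan is to construct a second involution $\sigma_1$ of $S$ that commutes with $\sigma_0$ and has quotient $\bar{X}_1$ (the double cover of $\bP^2$ branched along $C + L_0$), so that $\langle \sigma_0, \sigma_1\rangle \cong (\bZ/2\bZ)^2$ realizes $\psi_0 \circ \delta_0$ as the quotient map of $S$.

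To produce $\sigma_1$, observe that the construction is symmetric in the two lines: exchanging the roles of $L_0$ and $L_1$ gives the right-hand diagram of \eqref{diagram}, which builds a smooth projective surface $S'$ equipped with a covering involution $\sigma_1$ over $\bar{X}_1$. The core step is to identify $S' \cong S$ canonically over $\bP^2$. Both are smooth degree-$4$ covers of $\bP^2$ whose branch divisor is $C + L_0 + L_1$, and under the transversality hypotheses (pairwise transverse intersections and $C \cap L_0 \cap L_1 = \emptyset$) they fit into the framework of smooth bidouble covers in the sense of Catanese \cite{Catanese_bidouble} and Pardini \cite{Pardini_cover}, with line bundle data $\calO(3), \calO(3), \calO(1)$ corresponding to the three intermediate double covers branched on $C + L_1$, $C + L_0$, and $L_0 + L_1$ respectively. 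Since the smooth bidouble cover with this prescribed data is unique up to isomorphism over $\bP^2$, we conclude $S' \cong S$, and this isomorphism transports $\sigma_1$ to an involution of $S$.

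With both $\sigma_0$ and $\sigma_1$ acting on $S$, the commutation $[\sigma_0, \sigma_1] = 1$ and the identification $S/\langle\sigma_0,\sigma_1\rangle = \bP^2$ are built into the bidouble cover structure. The two involutions are distinct because $\bar{X}_0 = S/\sigma_0$ and $\bar{X}_1 = S/\sigma_1$ are different as schemes over $\bP^2$ (their branch divisors differ). Hence $\langle \sigma_0, \sigma_1\rangle$ is a group of order $4$ acting on $S$ with quotient $\bP^2$; since $\psi_0 \circ \delta_0$ also has degree $4$, it coincides with the quotient map, and is therefore Galois with group $(\bZ/2\bZ)^2$.

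The main obstacle is the canonical identification $S' \cong S$. Abstractly it is an instance of the uniqueness of smooth bidouble covers, but it can also be checked directly by exhibiting both surfaces as smooth minimal models of the normalization of the singular fiber product $\bar{X}_0 \times_{\bP^2} \bar{X}_1$, and comparing the resolutions of the $A_1$-singularities that arise over $C \cap L_0$ and $C \cap L_1$.
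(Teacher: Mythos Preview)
Your approach is genuinely different from the paper's. The paper argues topologically: since $C+L_0+L_1$ is a nodal plane curve, a theorem of Zariski (see \cite{Fulton_complement} or \cite[Thm.~1.6]{Catanese_bidouble}) says $\pi_1(\bP^2\setminus(C+L_0+L_1))$ is abelian. Hence every subgroup is normal, the degree-$4$ cover $S\to\bP^2$ is automatically Galois with abelian group of order $4$, and one rules out $\bZ/4\bZ$ by observing that the branch divisor has degree $7$, not divisible by $4$ (or by noting $\sigma_0^2=\id$ directly). No second involution needs to be constructed.

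Your argument, by contrast, tries to build $\sigma_1$ by hand, but the central step as written is circular. You invoke the uniqueness of smooth bidouble covers with given building data to conclude $S\cong S'$, yet that uniqueness statement applies to $(\bZ/2\bZ)^2$-Galois covers: to feed $S$ into it you must already know $S$ carries a $(\bZ/2\bZ)^2$-action over $\bP^2$, which is exactly the assertion of the lemma. The iterated double-cover construction in the paper exhibits $S$ only as a composite of two degree-$2$ maps, not a priori as a Galois quotient; asserting that it ``fits into the framework of smooth bidouble covers'' begs the question. Likewise the line ``the commutation $[\sigma_0,\sigma_1]=1$ \dots\ is built into the bidouble cover structure'' presupposes what is to be shown.

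Your final paragraph correctly identifies the honest fix: realize $S$ as the minimal resolution of the normalization of $\bar X_0\times_{\bP^2}\bar X_1$. That fiber product carries a manifest $(\bZ/2\bZ)^2$-action from the covering involutions of the two factors, and it survives normalization and resolution. Carrying out that identification (tracking the $A_1$-singularities over $C\cap L_0$, $C\cap L_1$, $L_0\cap L_1$ and matching with the construction via $X_0$) would give a complete, self-contained algebraic proof. This buys you independence from the Zariski--Fulton theorem, at the cost of a concrete local computation you have only sketched. The paper's proof is shorter precisely because it outsources that work to the $\pi_1$ result.
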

\begin{proof}
By a result of Zariski the fundamental group $\pi_1(\bP^2-(C+L_0+L_1))$ is abelian (see for example \cite{Fulton_complement} or \cite[Thm. 1.6]{Catanese_bidouble}). Therefore, the covering map $\psi_0 \circ \delta_0$ is defined by a normal subgroup and is Galois. Clearly, the Galois group is an abelian group of order $4$. But it can not be $\bZ/4\bZ$ because otherwise the branched loci $C+L_0+L_1$ is divisible by $4$ in $\Pic(\bP^2)$ (or one can directly check that $\sigma_0^2=\id$ in the group of deck transformations $\mathrm{Deck}(S/\bP^2)$). 
\end{proof}

Since the Galois group of the bidouble cover $S$ is $(\bZ/2\bZ)^2$, there is a symmetric construction for $S$. Namely, one takes the double cover $\delta_1: \bar{X}_1 \rightarrow \bP^2$ branched along $C+L_0$ and resolves the five $A_1$ singularities to obtain a $K3$ surface $g_1: X_1 \rightarrow \bar{X}_1$. Call the exceptional curves $F_1, \cdots, F_5 \subset X_1$. It can be shown that $(\delta_1\circ g_1)^{-1}(L_1) + F_1 + \cdots + F_5$ is divisible by $2$ in $\Pic(X_1)$. Let $S_1$ be the double cover of $X_1$ along $(\delta_1\circ g_1)^{-1}(L_1) + F_1 + \cdots + F_5$. The surface $S$ is obtained by contracting the $(-1)$-curves on $S_1$. Let us use $\sigma_1$ to denote the involution of $S$ with $\bar{X}_1 = S/\sigma_1$. (Note that $\sigma_0$ and $\sigma_1$ generate the Galois group $(\bZ/2\bZ)^2$.) See the right part of Diagram (\ref{diagram}). 

\begin{proposition} \label{Hodge numbers}
Let $C$ be a smooth quintic curve and $L_0$, $L_1$ be two transverse lines with $C \cap L_0 \cap L_1 = \emptyset$. Let $S$ be the $(\bZ/2\bZ)^2$-cover of $\bP^2$ branched along $C+L_0+L_1$. Then the surface $S$ is a minimal algebraic surface of general type with $p_g(S) =2$ and $K_S^2=1$. Moreover, $S$ is simply connected and the canonical bundle $K_S$ is ample. 
\end{proposition}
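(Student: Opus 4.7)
The plan is to compute the numerical invariants $K_S^2$, $p_g(S)$, $q(S)$ via the intermediate double cover $\varphi_0 : S_0 \to X_0$ from Diagram~\eqref{diagram}, and then to handle minimality, ampleness, and simple connectedness separately, using the covering structure together with standard results on Horikawa surfaces.

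For the invariants I would first compute intersection numbers on the $K3$ surface $X_0$: the strict transform $D_1 \subset X_0$ of $L_1$ and the exceptional curves $E_i$ of the canonical resolution are all $(-2)$-curves, with $D_1 \cdot E_i = 1$ and $E_i \cdot E_j = 0$ for $i \neq j$. With $\calL = \calO_{X_0}(D_1 + \sum E_i)$ this yields $\calL^2 = -2$. The standard double cover formulas give $K_{S_0} = \varphi_0^{\ast}\calL$, hence $K_{S_0}^2 = 2\calL^2 = -4$, and $\varphi_{0\ast}\omega_{S_0} = \omega_{X_0} \oplus \calL$. Each $E_i$ lies in the branch divisor of $\varphi_0$, so its preimage $\widetilde E_i \subset S_0$ is a $(-1)$-curve; contracting the five $\widetilde E_i$ to obtain $S$ yields $K_S^2 = -4 + 5 = 1$. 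To show $p_g(S) = 2$ I would verify $h^0(X_0, \calL) = 1$ directly: since $\calL \cdot E_i = -1$, every effective divisor in $|\calL|$ must contain each $E_i$ as a fixed component, reducing the computation to $|D_1|$, and $D_1$ is a $(-2)$-curve on a $K3$ surface, so $h^0(D_1) = 1$. Hence $p_g(S) = h^0(\calO_{X_0}) + h^0(\calL) = 2$, and a parallel Riemann--Roch argument using $\chi(\calL^{-1}) = 1$ together with Serre duality gives $q(S) = 0$.

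For minimality, any $(-1)$-curve $F \subset S$ distinct from the contracted $\widetilde E_i$ would lift to a $(-1)$-curve $\widetilde F \subset S_0$ disjoint from the $\widetilde E_i$; using $K_{S_0} = \varphi_0^{\ast}\calL$ and the projection formula, its image in $X_0$ would be an effective curve meeting $\calL$ negatively, forcing it to be a combination of $D_1$ and the $E_i$, a contradiction. Hence $S$ is minimal, and since $K_S^2 = 1 > 0$ with $p_g(S) = 2 > 0$, it is of general type. For simple connectedness I would invoke Horikawa's classification \cite{Horikawa_II}: the canonical model of any minimal surface of general type with $p_g = 2$, $K^2 = 1$ is a degree $10$ hypersurface in $\bP(1,1,2,5)$, and a Lefschetz-type argument for weighted hypersurfaces yields simple connectedness.

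Ampleness of $K_S$ then amounts to excluding irreducible $(-2)$-curves $E \subset S$ with $K_S \cdot E = 0$. Any such $E$ would lift via $S_0 \to S$ and descend through $\varphi_0$ to a $(-2)$-curve $C'$ on the $K3$ surface $X_0$ orthogonal to $D_1$ and all the $E_i$; under the assumed transversality of $L_0, L_1$ with $C$ and the condition $L_0 \cap L_1 \cap C = \emptyset$, the Picard lattice of $X_0$ (and symmetrically of $X_1$) admits no such effective orthogonal class. I expect this last step---upgrading nefness of $K_S$ to ampleness by ruling out exceptional $(-2)$-configurations---to be the most delicate part of the argument.
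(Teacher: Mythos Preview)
Your computation of $p_g(S)$ via $\varphi_{0*}\omega_{S_0} \cong \calO_{X_0} \oplus \calL$ and $h^0(\calL) = 1$ is exactly what the paper does. The real divergence is in how you handle $K_S^2$, minimality, and ampleness. The paper observes directly (citing \cite[Lem.~3.2]{Morrison_todorov}) that for the composite cover $\pi = \delta_0 \circ \psi_0 : S \to \bP^2$ one has $2K_S \sim \pi^{\ast}\calO_{\bP^2}(1)$. Since $\pi$ is finite, the pull-back of an ample class is ample, so $K_S$ is ample; this single line yields $K_S^2 = \tfrac14\,\pi^{\ast}\calO_{\bP^2}(1)^2 = 1$, minimality (an ample canonical class meets every curve positively, so there are no $(-1)$-curves), and the absence of $K_S$-trivial $(-2)$-curves, all at once. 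For simple connectedness the paper cites \cite{Bombieri} and \cite{Catanese_bidouble} rather than going through the weighted hypersurface model.

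Your route through $S_0$ recovers $K_S^2 = 1$ correctly but leaves ampleness as a separate and, as you yourself flag, delicate step. The gap there is genuine. Your proposed argument---lift a hypothetical $K_S$-trivial $(-2)$-curve to $S_0$, push it down to a $(-2)$-curve on $X_0$ orthogonal to $D_1, E_1, \ldots, E_5$, then exclude such a class in $\Pic(X_0)$---has two weak points. First, the image under a double cover of a $(-2)$-curve need not be a $(-2)$-curve (it depends on how the curve sits relative to the involution and the ramification), so the descent requires more care than you indicate. Second, and more seriously, the proposition is asserted for \emph{every} triple $(C, L_0, L_1)$ satisfying the transversality hypotheses, not only generic ones; for special smooth quintics $C$ the Picard lattice of $X_0$ can be strictly larger than the lattice $M$ generated by $D_1, E_1, \ldots, E_5$, and you would have to exclude orthogonal effective $(-2)$-classes in that larger lattice, which your sketch does not address. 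The identity $2K_S \sim \pi^{\ast}\calO_{\bP^2}(1)$ sidesteps all of this and is the missing idea.
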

\begin{proof}
Notations as above. Denote by $\pi: S \rightarrow \bP^2$ the covering map $\delta_0 \circ \psi_0 = \delta_1 \circ \psi_1$. It is clear from the construction that $S$ is smooth. By \cite[Lem. 3.2]{Morrison_todorov} the canonical bundle $K_S$ of the double cover $S$ can be computed as $2K_S \sim \pi^*\calO_{\bP^2}(1)$. It follows that $K_S$ is ample and $K_S^2=1$. In particular, $K_S$ is big and nef and hence $S$ is a minimal surface of general type. Now let us compute $h^{2,0}(S)$ which clearly equals $h^0(S_0, \calO_{S_0}(K_{S_0}))$. Let $\calL = \calO_{X_0}(D_1 + E_1 + \cdots + E_5)$ be the line bundle associated with the double cover $\varphi_0: S_0 \rightarrow X_0$ of the $K3$ surface $X_0$. Note that $H^0(S_0, \calO_{S_0}(K_{S_0})) = H^0(X_0, \varphi_{0*}\calO_{S_0}(K_{S_0})) = H^0(X_0, \varphi_{0*}\varphi_0^*(\calL))$. Since $\varphi_{0*}\varphi_0^*(\calL) = \calL \otimes \varphi_{0*}(\calO_{X_0}) = \calL \otimes (\calO_{X_0} \oplus \calL^{-1}) = \calL \oplus \calO_{X_0}$, we have $h^0(S_0, \calO_{S_0}(K_{S_0})) = h^0(X_0, \calO_{X_0}) + h^0(X_0, \calL)$. Because $D_1 + E_1 + \cdots + E_5$ is effective and $(D_1 + E_1 + \cdots + E_5)^2 = -2$, the space $H^0(X_0, \calL)$ is $1$-dimensional. Thus $h^{2,0}(S)=h^0(S_0, K_{S_0})=2$ (this is also mentioned in \cite[Rmk. 8]{Catanese_kunev}). By \cite[Thm. 11, Thm. 14]{Bombieri} or \cite[Prop. 2.7]{Catanese_bidouble} the surface $S$ is simply connected.
\end{proof}

Algebraic surfaces of general type with $p_g=2$ and $K^2=1$ have been studied by Horikawa \cite{Horikawa_II}. We call the bidouble covers $S$ constructed above {\sl special Horikawa surfaces}. 

\begin{proposition}
The canonical model of an algebraic surface $Y$ of general type with $q(Y)=0$, $p_g(Y)=2$ and $K_Y^2 = 1$ is a hypersurface of degree $10$ in $\bP(1,1,2,5)$. If $K_Y$ is ample, then $Y$ is isomorphic to a quasi-smooth hypersurface of degree $10$ in $\bP(1,1,2,5)$.
\end{proposition}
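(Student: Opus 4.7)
The plan is to compute the canonical ring $R(Y)=\bigoplus_{n\ge 0}H^0(Y,nK_Y)$ explicitly and identify it as a hypersurface algebra with weighted generators of degrees $(1,1,2,5)$. We may assume $Y$ is the smooth minimal model; then the canonical model is $\Proj R(Y)$, and when $K_Y$ is ample this agrees with $Y$ itself. Since $K_Y$ is big and nef, Kodaira vanishing gives $h^i(nK_Y)=0$ for $i>0$ and $n\ge 2$, so Riemann--Roch yields
\[
h^0(nK_Y)=\chi(\calO_Y)+\binom{n}{2}K_Y^2=3+\binom{n}{2},
\]
using $\chi(\calO_Y)=1-q+p_g=3$ and $K_Y^2=1$; also $h^0(K_Y)=p_g=2$.

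Next I identify the generators by Hilbert-function bookkeeping. Let $x_0,x_1$ be a basis of $H^0(K_Y)$. The three monomials $x_0^2,x_0x_1,x_1^2$ span at most a $3$-dimensional subspace of the $4$-dimensional $H^0(2K_Y)$, so a new generator $y$ of weight $2$ is needed. Counting weighted monomials in $(1,1,2)$ exactly matches $h^0(nK_Y)$ in degrees $3$ and $4$, but in degree $5$ there are only $12$ such monomials against $h^0(5K_Y)=13$, forcing one further generator $z$ of weight $5$. For $n\ge 6$, direct monomial counts in weights $(1,1,2,5)$ agree with the Riemann--Roch formula, so no further generators should appear.

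Passing to relations: in weighted degree $10$ the free algebra $\bC[x_0,x_1,y,z]$ has exactly one more monomial than $h^0(10K_Y)$, giving a single relation. Its coefficient on $z^2$ must be nonzero, otherwise $z$ would lie in the subring $\bC[x_0,x_1,y]$ and contradict its linear independence in degree $5$; completing the square then produces a relation of the form $z^2=F(x_0,x_1,y)$ with $F$ of weighted degree $10$. This realizes $\Proj R(Y)$ as the hypersurface $\{z^2=F\}\subset\bP(1,1,2,5)$, proving the first assertion. When $K_Y$ is ample, $Y\cong\Proj R(Y)$; the resulting hypersurface is then quasi-smooth because its affine cone must be smooth away from the origin. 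Indeed, from $f=z^2-F$ one checks that non-quasi-smoothness can only occur at points where $z=0$ and $\nabla_{(x_0,x_1,y)}F=0$, and smoothness of $Y$ forces this to fail; the two toric singular points $[0:0:1:0]$ and $[0:0:0:1]$ of $\bP(1,1,2,5)$ are handled by checking that the coefficient of $y^5$ in $F$ (respectively the equation at $[0:0:0:1]$) is compatible with $Y$ being a smooth variety.

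The main obstacle is upgrading the dimension count of the second step into a genuine surjectivity statement for the multiplication maps $H^0(aK_Y)\otimes H^0(bK_Y)\to H^0((a+b)K_Y)$, i.e.\ showing that $x_0,x_1,y,z$ actually generate $R(Y)$ rather than merely spanning a subring with the correct Hilbert function. For surfaces with $p_g=2$ and $K_Y^2=1$ the canonical map $\phi_{|K_Y|}\colon Y\dashrightarrow\bP^1$ is a pencil with a single base point whose generic fibre is a curve of genus $2$; a careful analysis of this pencil together with the bicanonical system (in the spirit of Horikawa's original treatment) provides the required surjectivity and closes the argument.
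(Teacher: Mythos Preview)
The paper does not prove this proposition; it simply cites Horikawa \cite[\S2]{Horikawa_II} and \cite[\S VII.7]{BPV}. Your sketch is precisely the outline of Horikawa's argument (compute the plurigenera, locate generators in degrees $1,1,2,5$, find a single relation in degree $10$), so there is no real divergence of approach to discuss.

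Two remarks on the write-up. First, your justification for the nonvanishing of the $z^2$ coefficient is not correct as stated: a relation of the form $z\,g_5(x_0,x_1,y)+h_{10}(x_0,x_1,y)=0$ with $g_5\neq 0$ does \emph{not} force $z$ into the subring $\bC[x_0,x_1,y]$, so it does not contradict the linear independence you invoke. The clean argument is that the canonical ring $R(Y)$ of a surface with at worst du~Val singularities is Cohen--Macaulay, hence a free module over the regular subring $\bC[x_0,x_1,y]$; a Hilbert-series comparison shows this module has rank $2$ with basis $\{1,z\}$, so $z^2$ lies in $\bC[x_0,x_1,y]\cdot 1+\bC[x_0,x_1,y]\cdot z$, giving a relation with $z^2$ present. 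Alternatively, if the hypersurface equation were linear in $z$ it would pass through $[0{:}0{:}0{:}1]$ and projection from that point would exhibit $\Proj R(Y)$ as birational to $\bP(1,1,2)$, contradicting $Y$ being of general type.

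Second, you correctly identify the real content of the proof as the surjectivity of the multiplication maps (equivalently, linear independence of the monomials in the degrees where the counts match). Your dimension counts only show that the subring generated by $x_0,x_1,y,z$ has the \emph{right} Hilbert function \emph{provided} no accidental relations appear below degree $10$; establishing this is exactly Horikawa's analysis of the genus-$2$ canonical pencil with its single base point, and your deferral to that is appropriate.
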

\begin{proof}
This has been proved in \cite[\S 2]{Horikawa_II}. See also \cite[\S VII.7]{BPV}. 
\end{proof}

\begin{remark}
A weighted hypersurface $Y \subset \mathbb{P}$ is {\sl quasi-smooth} if the associated affine quasicone is smooth outside the vertex $0$ (cf. \cite{Dolgachev_weighted}). If in addition $\mathrm{codim}_{Y}(Y \cap \bP_{\mathrm{sing}}) \geq 2$, then $Y_{\mathrm{sing}} = Y \cap \bP_{\mathrm{sing}}$. (In our case, we have $\bP = \bP(1,1,2,5)$ which has two singular points $[0,0,1,0]$ and $[0,0,0,1]$ and hence $S_{\mathrm{sing}} = S \cap \bP_{\mathrm{sing}}$.) Moreover, the cohomology $H^k(Y, \bC)$ of a quasi-smooth hypersurface $Y$ admits a pure Hodge structure and explicit calculation can be done using (a slightly generalized version of) Griffiths residue.
\end{remark}

We have shown that special Horikawa surfaces $S$ have ample canonical bundles. A natural question is which degree $10$ quasi-smooth hypersurfaces in $\bP(1,1,2,5)$ do they correspond to.

\begin{proposition} \label{S equation}
Let $S$ be a $(\bZ/2\bZ)^2$-cover of $\bP^2$ branched over a smooth quintic $C$ and two general lines $L_0$, $L_1$ (i.e. $S$ is a special Horikawa surface). Then $S$ is isomorphic to a quasi-smooth hypersurface $z^2 = F(x_0^2, x_1^2, y)$ in $\bP(1,1,2,5) = \mathrm{Proj}(\bC[x_0, x_1, y, z])$ where $F$ is a quintic polynomial.
\end{proposition}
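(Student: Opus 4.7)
By the previous proposition and the fact that $K_S$ is ample (shown in \propositionref{Hodge numbers}), $S$ is isomorphic to a quasi-smooth weighted hypersurface of degree $10$ in $\bP(1,1,2,5)$. The plan is to use the Galois action of $G = (\bZ/2\bZ)^2$ on $S$, which lifts to a linear $G$-action on $\bP(1,1,2,5)$ via its natural action on the canonical ring $R(S, K_S) = \bigoplus_n H^0(S, nK_S)$, and thereby constrain the defining equation.

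First, I would compute the $G$-eigenspace decomposition of $H^0(nK_S)$ for $n=1,2,5$ using the pushforward formulas for the bidouble cover $\pi \colon S \to \bP^2$. With branch data $L_0, L_1, C$ and, following Catanese--Pardini, associated line bundles satisfying $\calL_{\chi_{10}}^{\otimes 2} = \calO(L_1 + C) = \calO(6)$, $\calL_{\chi_{01}}^{\otimes 2} = \calO(L_0 + C) = \calO(6)$, and $\calL_{\chi_{11}}^{\otimes 2} = \calO(L_0 + L_1) = \calO(2)$, the decomposition $\pi_* \omega_S = \bigoplus_\chi \omega_{\bP^2}(\calL_\chi)$, together with $2K_S = \pi^*\calO_{\bP^2}(1)$, yields the following character structure. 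The space $H^0(K_S)$ is $2$-dimensional with basis $x_0, x_1$ sitting in the eigenspaces $\chi_{10}, \chi_{01}$ respectively. The trivial eigenspace of $H^0(2K_S)$ is $3$-dimensional and is identified with $\pi^*H^0(\bP^2, \calO(1))$; it contains $x_0^2 = \pi^*\ell_0$ and $x_1^2 = \pi^*\ell_1$, so the required new weight-$2$ generator can be taken to be $y = \pi^*\ell_2$ for any third linear form $\ell_2$, and is $G$-invariant. Finally, $H^0(5K_S)^{\chi_{11}}$ is $1$-dimensional and is not contained in the subring generated by $x_0, x_1, y$, giving the new weight-$5$ generator $z$ with character $\chi_{11}$.

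With these identifications the group acts by $\sigma_0 \cdot (x_0, x_1, y, z) = (-x_0, x_1, y, -z)$ and $\sigma_1 \cdot (x_0, x_1, y, z) = (x_0, -x_1, y, -z)$, and the weighted degree $10$ defining polynomial $P$ of the canonical model must be $G$-invariant. Since $\deg z = 5$, write $P = a z^2 + b(x_0, x_1, y) z + c(x_0, x_1, y)$ with $a \in \bC$. Invariance forces $c$ to lie in the trivial eigenspace, hence to be a polynomial in $x_0^2, x_1^2, y$, and forces $b$ to be a weighted-degree-$5$ polynomial in the $\chi_{11}$ eigenspace; but a $\chi_{11}$-monomial $x_0^{2i+1} x_1^{2j+1} y^k$ has weighted degree $2(i+j+k+1)$, which is even, so no nonzero such $b$ exists. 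Hence $P = a z^2 - F(x_0^2, x_1^2, y)$; since $z$ is a genuine new generator we must have $a \neq 0$, and after rescaling the equation becomes $z^2 = F(x_0^2, x_1^2, y)$, with $F$ a weighted-degree-$10$ polynomial in $x_0, x_1, y$ involving only even powers of the $x_i$, equivalently a quintic in the three variables $x_0^2, x_1^2, y$.

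The main obstacle is the careful bookkeeping of the character decomposition of the canonical ring under the bidouble cover action, in particular pinning down the correspondence between the three non-trivial characters of $G$ and the three branch components $L_0, L_1, C$, and correctly placing each of the generators $x_0, x_1, y, z$ in its eigenspace. Once that is in place, the parity/character argument constraining the equation is immediate.
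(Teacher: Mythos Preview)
Your approach is correct and is essentially a systematized version of the paper's \emph{alternative} proof (the second paragraph of their argument). The paper also uses the induced $G$-action on the canonical ring to diagonalize the generators and then constrain the equation; the difference is mainly one of economy. The paper works with a single involution $\sigma_0$, normalizes so that $\sigma_0$ acts by $x_0\mapsto -x_0$, deduces that only even powers of $x_0$ occur, and then gets even powers of $x_1$ for free from degree parity (in a weighted degree~$10$ polynomial containing only even powers of $x_0$ and the weight-$2$ variable $y$, the exponent of $x_1$ is forced to be even). You instead invoke the full Pardini decomposition to place each of $x_0,x_1,y,z$ in its eigenspace and then argue by characters; this is heavier machinery but makes the role of each branch component transparent. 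The paper's \emph{primary} proof is rather different and more geometric: it identifies the bicanonical map $\pi$ explicitly as $[x_0,x_1,y,z]\mapsto [x_0^2,x_1^2,y]$ and reads off the form of the equation from the fact that $(z=0)$ maps to the quintic $C\subset\bP^2$.

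Two small points to tighten. First, a priori the defining polynomial is only a $G$-\emph{semi}-invariant (since $G$ preserves the hypersurface, not the polynomial); the step ``$P$ must be $G$-invariant'' needs justification. Once you know $a\neq 0$ this is immediate, because $z^2$ lies in the trivial character. Second, your reason for $a\neq 0$ (``$z$ is a genuine new generator'') is not quite sufficient: if $P=b\,z+c$ with $b\in\bC[x_0,x_1,y]_5$ nonzero, $z$ is still not expressible in $\bC[x_0,x_1,y]$ inside the quotient ring. The clean argument, which the paper uses, is quasi-smoothness: if $a=0$ then the affine cone over $S$ is singular along the entire $z$-axis. With these two fixes your proof goes through. (A side remark: with the paper's convention that $\sigma_0$ fixes $\Lambda_0$ pointwise, one actually finds $\sigma_0^*x_0=+x_0$ as a section of $K_S$, so the linear action is $\sigma_0:(x_0,x_1,y,z)\mapsto (x_0,-x_1,y,-z)$; your formula $(-x_0,x_1,y,-z)$ corresponds to the paper's $\sigma_1$. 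This labeling swap is harmless for the argument but worth aligning if you cite the paper's conventions.)
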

\begin{proof}
Denote by $\pi: S \rightarrow \bP^2$ the covering map $\delta_0 \circ \psi_0 = \delta_1 \circ \psi_1$. For $i=0,1$ we let $\Lambda_i$ be the reduced inverse image $\pi^{-1}(L_i)$ of $L_i$ in $S$. By the proof of Proposition \ref{Hodge numbers} we have $\Lambda_i \in |K_S|$. Choose a section $x_i \in H^0(S, \calO_S(K_S))$ which cuts out $\Lambda_i$. Clearly $\{x_0,x_1\}$ forms a basis of $H^0(S, \calO_S(K_S))$. Since $2K_S \sim \pi^*\calO_{\bP^2}(1)$, the covering map $\pi$ is defined by a subspace of $|2K_S|$. Choose $y \in H^0(S, \calO_S(2K_S))$ so that the subspace of $|2K_S|$ is generated by $x_0^2$, $x_1^2$ and $y$. Note that $y \neq x_0x_1$. By choosing a suitable $z \in H^0(S, \calO_S(5K_S))$ we assume that the equation for $S$ is $z^2 = F'(x_0,x_1,y)$ where $F'$ is a weighted homogeneous polynomial of degree $10$ in $\bP(1,1,2,5)$. (The defining equation must contain $z^2$ otherwise $S$ is not quasi-smooth. Then we complete the square for $z$ which does not affect the other coordinates.) The ramification locus $\pi$ consists of three components $(x_0=0)$, $(x_1=0)$ and $(z=0)$ which are mapped to $L_0$, $L_1$ and $C$ respectively. The proposition then follows.  

Alternatively, one considers the action of $\sigma_i$ (see Diagram (\ref{diagram})) on the canonical ring $\bigoplus_{m\geq0} H^0(S, \calO_S(mK_S))$, hence on $\bP(1,1,2,5)$. Note that the involution $\sigma_0$ fixes $\Lambda_0$ pointwise and $\sigma_0(\Lambda_1) = \Lambda_1$. (Similarly for the involution $\sigma_1$.) As in the proof of \cite[Thm. $3$]{Catanese_kunev} one can choose $y, z$ such that $\sigma_0$ acts on $\bP(1,1,2,5)$ by $[x_0,x_1,y,z] \mapsto [-x_0,x_1,y,z]$. Since $\sigma_0$ acts on $S$ we assume that the defining equation for $S$ is an eigenvector for $\sigma_0$. It is not difficult to see that only even powers of $x_0$ can appear in the defining equation. Next one normalizes the equation and gets $z^2 = F'(x_0,x_1,y)$. Since $F'$ is a weighted homogeneous polynomial of degree $10$, $x_1$ also has even powers.
\end{proof}

\begin{remark}
\leavevmode
\begin{enumerate}
\item Because $S$ is quasi-smooth, the quintic $F$ must contain $y^5$ (more generally, see \cite[Thm. 3.3]{FPR}).  
\item We have shown that a special Horikawa surface $S$ is isomorphic to a quasi-smooth hypersurface in $\bP(1,1,2,5)$ with the equation $z^2 = F(x_0^2, x_1^2, y)$. The covering map $S \rightarrow \bP^2$ is given by $[x_0, x_1, y, z] \mapsto [x_0^2, x_1^2, y]$. The Galois group is generated by $\sigma_{x_0}: [x_0, x_1, y, z] \mapsto [-x_0, x_1, y, z]$ and $\sigma_{x_1}: [x_0, x_1, y, z] \mapsto [x_0, -x_1, y, z]$ which correspond to $\sigma_0$ and $\sigma_1$ respectively.
\item Special Horikawa surfaces have moduli dimension $({2+5 \choose 2}-1) + 2 + 2 - 8 = 16$ (dimension of moduli for plane quintics together with two lines minus dimension of $\PGL(3)$). The dimension of the moduli for degree $10$ quasi-smooth hypersurfaces in $\bP(1,1,2,5) = \mathrm{Proj}(\bC[x_0, x_1, y, z])$ cut out by $z^2 = F(x_0^2, x_1^2, y)$ is ${2+5 \choose 2} - 5 = 16$ (dimension of the quintic polynomials $F$ minus dimension of the subgroup of the automorphism group of $\bP(1,1,2,5)$ acting on $z^2 = F(x_0^2, x_1^2, y)$: a semidirect product of the group consisting of the elements $[x_0, x_1, y, z] \mapsto [ax_0, bx_1, cy+dx_0^2+ex_1^2, z]$ with the group generated by $[x_0, x_1, y, z] \mapsto [x_1, x_0, y, z]$).
\end{enumerate}
\end{remark}

Now let us study how the Galois group $(\bZ/2\bZ)^2$ acts on the Hodge structures of special Horikawa surfaces $S$. We view $S$ as a degree $10$ quasi-smooth hypersurface in $\bP(1,1,2,5)$ cut out by the equation $z^2 = F(x_0^2, x_1^2, y)$. Choose the K\"ahler form corresponding to the canonical curve $(x_0=0)$ or $(x_1=0)$ (which are the reduced inverse image of $L_0$ and $L_1$ respectively) and the primitive cohomology can be described using Griffiths residue.   

\begin{proposition} \label{residue}
Let $Y$ be a quasi-smooth hypersurface of degree $d$ in a weighted projective space $\bP(a_0,a_1,\cdots,a_n)$. That is, $Y$ is given by a weighted homogeneous polynomial $G(z_0, z_1, \cdots, z_n)$ of degree $d$ whose partial derivatives have no common zero other that the origin. Let $$E = \sum \limits_{i=0}^n z_i\dfrac{\partial}{\partial z_i}$$ be the Euler vector field. Let $dV = dz_0 \wedge \cdots \wedge dz_n$ be the Euclidean volume form, and let $\Omega = i(E)dV$ (where $i$ denotes interior multiplication) be the projective volume form (which has degree $a_0 + \cdots + a_n$). Consider expressions of the form $$\Omega(A) = \dfrac{A \cdot \Omega}{G^{q}}$$ where $A$ is a homogeneous polynomial whose degree is such that $\Omega(A)$ is homogeneous of degree $0$. Then the Poincar\'{e} residues of $\Omega(A)$ span $F^{n-q}H_{\mathrm{prim}}^{n-1}(Y, \bC)$ where $F^{\bullet}$ denotes the Hodge filtration. Moreover, the residue lies in $F^{n-q+1}$ if and only if $A$ lies in the Jacobian ideal $J_G$ of $G$ (the ideal generated by the first partial derivatives of $G$).
\end{proposition}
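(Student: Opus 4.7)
The plan is to reduce to Griffiths' classical residue calculus for smooth projective hypersurfaces by passing to the affine cone, following Dolgachev and Steenbrink. Let $\pi: \C^{n+1}\setminus\{0\} \to \bP := \bP(a_0,\ldots,a_n)$ be the quotient by the weighted $\C^*$-action, and let $\tilde Y = \{G=0\}\setminus\{0\}$. Quasi-smoothness of $Y$ is precisely the condition that $\tilde Y$ is smooth, and $\pi|_{\tilde Y}$ is a principal $\C^*$-bundle onto the smooth locus of $Y$. The projective volume form $\Omega = i(E)\,dV$ is $\C^*$-equivariant of weight $a_0+\cdots+a_n$, so $\Omega(A) = A\Omega/G^q$ is $\C^*$-invariant exactly when $\deg A + \sum_i a_i = qd$, which is the homogeneity condition in the statement. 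Throughout, I will interpret the Poincar\'e residue as taking $\C^*$-invariant meromorphic forms on $\C^{n+1}\setminus\{0\}$ with pole along $\tilde Y$ and descending them to $\bP$.

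First I would identify $H^{n-1}_{\mathrm{prim}}(Y,\C)$ with a summand of $H^n(\bP\setminus Y,\C)$ via the Gysin/residue long exact sequence; since $\bP$ has only cyclic quotient singularities, its rational cohomology agrees with that of $\bP^n$, which takes care of the non-primitive part. Next, Grothendieck's algebraic de Rham theorem, in the quasi-smooth setting treated in \cite{Dolgachev_weighted}, gives an isomorphism $H^n(\bP\setminus Y,\C) \cong \bH^n(\bP, \Omega^{\bullet}_{\bP}(*Y))$. The pole-order filtration on $\Omega^{\bullet}_{\bP}(*Y)$ then maps onto the Hodge filtration on $H^{n-1}_{\mathrm{prim}}(Y,\C)$, with pole order $q$ corresponding to $F^{n-q}$. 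Any top-form on $\bP$ with pole along $Y$ of order $q$ becomes, after pullback to $\C^{n+1}\setminus\{0\}$, a multiple of $\Omega$ by $A/G^q$ with $A$ of the prescribed weighted degree, yielding the surjectivity claim.

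For the Jacobian characterization I will use the classical reduction-of-pole identity: starting from $A = \sum_i B_i\,\partial G/\partial z_i$ with $B_i$ of appropriate weighted degree, a Cartan-formula calculation exhibits $A\,\Omega/G^q$ as an exact form plus a form of pole order $q-1$, so its residue automatically lies in $F^{n-q+1}$. For the converse, a Koszul-resolution dimension count identifies the graded piece of the Jacobian ring $R := \C[z_0,\ldots,z_n]/J_G$ in weighted degree $qd - \sum_i a_i$ with the primitive Hodge number $h^{n-q,\,q-1}_{\mathrm{prim}}(Y)$, giving the reverse containment.

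The main obstacle is justifying the Hodge-filtration identification in the presence of the singular locus of $\bP$: one must show that the pole-order spectral sequence for $\bH^n(\bP, \Omega^{\bullet}_{\bP}(*Y))$ degenerates at $E_1$, and that restricting to $\C^*$-invariant classes upstairs does not lose or create Hodge classes downstairs. The codimension hypothesis $\mathrm{codim}_Y(Y \cap \bP_{\mathrm{sing}}) \geq 2$ noted in the preceding remark is what makes the standard mixed-Hodge-theoretic arguments go through; in our application, the two singular points $[0,0,1,0]$ and $[0,0,0,1]$ of $\bP(1,1,2,5)$ do not lie on any quasi-smooth $S$ of the form $z^2 = F(x_0^2,x_1^2,y)$ with nonzero $y^5$-coefficient, so $S \cap \bP_{\mathrm{sing}} = \emptyset$ and the residue calculus applies unchanged.
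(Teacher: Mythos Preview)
The paper does not prove this proposition at all: its ``proof'' is a one-line citation of \cite[Prop.~1.2]{ACT} and \cite{Dolgachev_weighted}. Your sketch is essentially a correct outline of the argument contained in those references (ultimately Steenbrink's extension of Griffiths' residue calculus to quasi-smooth weighted hypersurfaces), so in that sense you are supplying exactly what the paper defers to the literature.

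One small correction: you invoke the condition $\mathrm{codim}_Y(Y\cap\bP_{\mathrm{sing}})\geq 2$ from the preceding remark as the hypothesis that makes the Hodge-theoretic argument work. That is not its role. Quasi-smoothness alone guarantees that $Y$ is a $V$-manifold (orbifold), and Steenbrink's Hodge theory for $V$-manifolds together with the Bott vanishing for the sheaves $\tilde\Omega^p_{\bP}$ already yields degeneration of the pole-order spectral sequence and the identification of pole filtration with Hodge filtration; no extra codimension hypothesis is required for the proposition as stated. In the paper, the codimension-$2$ remark is used only to identify $Y_{\mathrm{sing}}$ with $Y\cap\bP_{\mathrm{sing}}$, which is a separate observation. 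Your check that $S\cap\bP(1,1,2,5)_{\mathrm{sing}}=\emptyset$ in the application is correct but, for the same reason, unnecessary for invoking the residue description.
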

\begin{proof}
This is \cite[Prop. 1.2]{ACT}. See also \cite{Dolgachev_weighted}. 
\end{proof}

\begin{proposition} \label{decomposition}
Let $S \in \bP(1,1,2,5)$ be a quasi-smooth hypersurface of degree $10$ given by $z^2 = F(x_0^2, x_1^2, y)$. Set $\sigma_0$ and $\sigma_1$ to be the automorphisms defined by $[x_0, x_1, y, z] \mapsto [-x_0, x_1, y, z]$ and $[x_0, x_1, y, z] \mapsto [x_0, -x_1, y, z]$ respectively. Let $\chi_0$ and $\chi_1$ be the corresponding characters of the Galois group defined by $\chi_0(\sigma_0) = 1$, $\chi_0(\sigma_1) = -1$ and similarly for $\chi_1$. Then we have the following decomposition of Hodge structures ($H^2_{\chi_l}(S,\bQ)$ is the eigenspace corresponding to $\chi_l$ for $l=0,1$) $$H^2_{\mathrm{prim}}(S,\bQ) = H^2_{\chi_0}(S,\bQ) \oplus H^2_{\chi_1}(S,\bQ).$$ Moreover, $H^2_{\chi_l}(S,\bQ)$ ($l=0,1$) has Hodge numbers $[1,14,1]$.
\end{proposition}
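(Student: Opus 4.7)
My plan is to invoke the Griffiths residue description of \propositionref{residue} and track the $(\bZ/2\bZ)^2$-action throughout. Write $G = z^2 - F(x_0^2, x_1^2, y)$. Both $\sigma_0$ and $\sigma_1$ fix $G$, while a direct calculation with $\Omega = i(E)\,dV$ gives $\sigma_0^*\Omega = \sigma_1^*\Omega = -\Omega$. Hence, for a polynomial $A$ whose parity in the variables $(x_0, x_1)$ is $(\epsilon_0, \epsilon_1) \in (\bZ/2\bZ)^2$, the residue $\Res(A \cdot \Omega/G^q)$ lies in the $\chi$-eigenspace with $\chi(\sigma_i) = (-1)^{\epsilon_i + 1}$. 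Thus parity $(1, 0)$ corresponds to $\chi_0$, parity $(0, 1)$ to $\chi_1$, parity $(1, 1)$ to the trivial character $\chi_{\mathrm{triv}}$, and parity $(0, 0)$ to the sign character $\chi_{\mathrm{sign}}$. The proposition reduces to a parity-graded dimension count of the Milnor algebra in the appropriate weighted degrees.

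Since $\partial G/\partial z = 2z$ lies in the Jacobian ideal $J_G$, the Milnor algebra is canonically isomorphic to $M := R'/J'$ where $R' = \bC[x_0, x_1, y]$ (with weights $1, 1, 2$) and $J' = (x_0 F_{u_0}, x_1 F_{u_1}, F_y)$. For $F^2 H^2 = H^{2, 0}$, the space $R_1 = \bC\langle x_0, x_1 \rangle$ splits into one-dimensional $(1, 0)$ and $(0, 1)$ parity pieces, yielding $h^{2, 0}_{\chi_0} = h^{2, 0}_{\chi_1} = 1$ and $h^{2, 0}_{\chi_{\mathrm{triv}}} = h^{2, 0}_{\chi_{\mathrm{sign}}} = 0$. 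Since all four characters are real-valued, each eigenspace is a sub-Hodge structure and Hodge symmetry gives the same counts for $H^{0, 2}$.

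For $H^{1, 1}_{\mathrm{prim}}$ we work in $M_{11}$. Any monomial $x_0^a x_1^b y^c \in R'_{11}$ satisfies $a + b + 2c = 11$, so $a + b$ is odd and $R'_{11}$ has only parities $(1, 0)$ and $(0, 1)$. This kills $\chi_{\mathrm{triv}}$ and $\chi_{\mathrm{sign}}$ in $H^{1, 1}_{\mathrm{prim}}$ and hence establishes the direct sum $H^2_{\mathrm{prim}}(S, \bQ) = H^2_{\chi_0} \oplus H^2_{\chi_1}$. Noether's formula applied to $S$ (using $p_g = 2$, $q = 0$, $K_S^2 = 1$) gives $h^{1, 1}_{\mathrm{prim}}(S) = 28$. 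A direct monomial count yields $\dim R'^{(1, 0)}_{11} = \dim R'^{(0, 1)}_{11} = 21$, so it suffices to show $\dim J'^{(1, 0)}_{11} = \dim J'^{(0, 1)}_{11} = 7$. The parity-$(1, 0)$ part of $J'_{11}$ is spanned by the subspaces $x_0 F_{u_0} \cdot R'^{(0, 0)}_2$, $x_1 F_{u_1} \cdot R'^{(1, 1)}_2$, and $F_y \cdot R'^{(1, 0)}_3$, whose dimensions are bounded by $3$, $1$, and $3$ respectively; the symmetric estimate applies to parity $(0, 1)$. Since $\dim J'_{11} = 42 - 28 = 14$, both inequalities must be equalities, giving $\dim M^{(1, 0)}_{11} = \dim M^{(0, 1)}_{11} = 14$ and the Hodge numbers $[1, 14, 1]$ as claimed.

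The main technical point --- and the one step that is not purely formal --- is the sharpness of these parity upper bounds for $J'_{11}$. This rests on the regular sequence property of the three generators of $J'$, which in turn follows from the quasi-smoothness of $S$: the partial derivatives of $G$ have no common zero outside the origin.
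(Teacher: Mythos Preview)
Your argument is correct. The decomposition step is exactly the paper's: Griffiths residues of $A\Omega/G^q$ with monomial $A = x_0^i x_1^j y^k$ of weighted degree $1$, $11$, or $21$ force $i+j$ odd, so only the characters $\chi_0$ and $\chi_1$ occur.

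Where you diverge from the paper is in establishing the Hodge numbers $[1,14,1]$. The paper does not count at all for a general $S$; instead it observes that the $\chi_l$-eigenspaces assemble into sub-variations of Hodge structure over the family, so their Hodge numbers are locally constant, and then checks them at the single Fermat-type point $z^2 = x_0^{10} + x_1^{10} + y^5$. Your route is a direct parity-graded dimension count in the Milnor algebra $M = R'/J'$, bounding each parity piece of $J'_{11}$ by $7$ and forcing equality via $\dim J'_{11} = 42 - 28$. This is more hands-on but has the advantage of being uniform in $S$ and entirely self-contained, without invoking a deformation argument. One small comment: your final paragraph slightly overstates what is needed. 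Once you have $\dim M_{11} = 28$ (whether from Noether's formula or, equivalently, from the Poincar\'e series $(1-t^9)^2(1-t^8)/((1-t)^2(1-t^2))$ of the regular-sequence quotient), the two upper bounds of $7$ are forced to be sharp by arithmetic alone; the regular sequence is already implicit in the Griffiths isomorphism $M_{11}\cong H^{1,1}_{\mathrm{prim}}$ rather than being a separate ingredient for sharpness.
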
 
\begin{proof}
Notations as in Proposition \ref{residue}. The decomposition is obtained by a Griffiths residue calculus. Specifically, let $G= F(x_0^2, x_1^2, y) - z^2$. Take a basis for $H^2_{\mathrm{prim}}(S,\bC)$ of the forms $\Res \frac{A \cdot \Omega}{G^{q}}$ with $A$ being certain monomials $x_0^ix_1^jy^k$. The cohomology group $H^{2,0}(S)$ (resp. $H_{\mathrm{prim}}^{1,1}(S), H^{0,2}(S))$ correspond to $q=1$ (resp. $q=2$, $q=3$) and hence $i+j+2k=1$ (resp. $i+j+2k=11$, $i+j+2k=21$). In particular, $i+j$ must be odd. It follows that only the characters $\chi_0$ and $\chi_1$ appear in the decomposition of $H^2_{\mathrm{prim}}(S,\bC)$ (and hence $H^2_{\mathrm{prim}}(S,\bQ)$). The eigenspace $H^2_{\chi_0}(S,\bQ)$ is a sub-Hodge structure because $H^2_{\chi_0}(S,\bQ) = \ker(\sigma_0^* -\id) (= \ker(\sigma_1^* +\id))$. Similarly for $H^2_{\chi_1}(S,\bQ)$. The claim on Hodge numbers can be checked for a special surface $z^2=x_0^{10} + x_1^{10} + y^5$ (consider the induced action of $\sigma_0$ or $\sigma_1$ on the polarized variation of rational Hodge structure with fibers $H^2_{\mathrm{prim}}(S,\bQ)$). 
\end{proof}

\begin{remark} 
Let $X_0$ and $X_1$ be the $K3$ surfaces associated to a special Horikawa surface $S$ in Diagram (\ref{diagram}). One can show that $T_{\bQ}(S)  \cong T_{\bQ}(X_0) \oplus T_{\bQ}(X_1)$ where $T_\bQ = \mathrm{Tr} \otimes \bQ$ denotes the rational transcendental lattice. Specifically, we consider the induced action $\sigma_0^*$ and $\sigma_1^*$ on $T_{\bQ}(S)$. The space $T_{\bQ}(S)$ decomposes as a direct sum of the eigenspaces of $\sigma_0^*$ with eigenvalues $1$ and $-1$. By \cite[Prop. $5$]{Shioda} we have $T_{\bQ}(X_0) \cong T_{\bQ}(S)^{\sigma_0^*}$ and $T_{\bQ}(X_1) \cong T_{\bQ}(S)^{\sigma_1^*}$ (where $T_{\bQ}(S)^{\sigma_l^*}$ denotes the invariant part). Proposition \ref{decomposition} allows us to identify $T_{\bQ}(S)^{\sigma_1^*}$ with the $(-1)$-eigenspace of $\sigma_0^*$ which completes the proof. Generically, $T_{\bQ}(S)$ has Hodge numbers $[2,28,2]$ (one applies \cite[Prop. 4]{Moonen} to $z^2 = x_0^{10} + x_1^{10}+ y^5$ to see the generic Picard number is $1$) and $T_{\bQ}(X_l)$ has Hodge numbers $[1,14,1]$ (cf. \cite[Cor. 4.15]{Laza_n16}) for $l=0,1$. 
\end{remark}

\section{The infinitesimal Torelli theorem} \label{sec_inftorelli}
We shall show in this section that (unlike for special Kunev surfaces \cite{Kynev} \cite{Catanese_kunev} \cite{Todorov_kunev}) the infinitesimal Torelli holds for special Horikawa surfaces $S$.

\begin{theorem}
Let $S$ be a bidouble cover of $\bP^2$ branched along a smooth quintic $C$ and two transverse lines $L_0$ and $L_1$ with $C \cap L_0 \cap L_1 = \emptyset$. The natural map 
\begin{equation} \label{inf torelli}
p: H^1(S, \calT_S) \rightarrow \Hom(H^0(S, \omega_S), H^1(S, \Omega_S^1)),
\end{equation}
given by cup product, is injective.
\end{theorem}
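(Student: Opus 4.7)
My plan is to realize $S$ as a quasi-smooth hypersurface in a weighted projective space, translate the infinitesimal period map into a multiplication in the Jacobian ring via Griffiths residues, and then verify injectivity by Macaulay's duality combined with a direct monomial computation that uses the very special shape of the defining equation. This is essentially the content of Usui's infinitesimal Torelli theorem \cite{Usui_wci} for weighted complete intersections specialized to our situation; the task reduces to checking the numerical hypotheses and one key multiplicative condition by hand.

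By Proposition~\ref{S equation} we may take $S = V(G) \subset \bP(1,1,2,5)$ with $G = z^2 - F(x_0^2, x_1^2, y)$ of weighted degree $d=10$. Quasi-smoothness forces the $y^5$-coefficient of $F$ to be nonzero, and this together with the $z^2$ term shows that $S$ avoids both singular points $[0,0,1,0]$ and $[0,0,0,1]$ of $\bP(1,1,2,5)$; hence $S$ is smooth. The Jacobian ring $R = \bC[x_0,x_1,y,z]/J_G$ is Artinian Gorenstein of socle degree
\begin{equation*}
\sigma \;=\; (n+1)d - 2|a| \;=\; 4\cdot 10 - 2\cdot 9 \;=\; 22,
\end{equation*}
with $|a|=9$, and Proposition~\ref{residue} identifies $H^{2,0}(S) \cong R_1$, $H^{1,1}_{\mathrm{prim}}(S) \cong R_{11}$ and $H^{0,2}(S) \cong R_{21}$. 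A standard Kodaira--Spencer argument for quasi-smooth weighted hypersurfaces (using that the Lie algebra of $\Aut(\bP(1,1,2,5))$ acts on $G$ through $J_G$, so infinitesimal automorphisms are killed in $R_{10}$) furnishes a surjection $R_{10} \twoheadrightarrow H^1(S, \calT_S)$ under which $p$ becomes the multiplication map
\begin{equation*}
\mu \colon R_{10} \longrightarrow \Hom(R_1, R_{11}).
\end{equation*}
Dualizing via the Macaulay pairings $R_k \times R_{22-k} \to R_{22} \cong \bC$, the injectivity of $p$ is equivalent to the surjectivity of
\begin{equation*}
R_1 \otimes R_{11} \longrightarrow R_{12}.
\end{equation*}

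I would finish by proving this surjectivity on monomials. A monomial $x_0^i x_1^j y^k z^l$ of weighted degree $12$ is zero in $R$ as soon as $l \geq 1$, since $\partial G/\partial z = 2z$ gives $z \equiv 0$ in $R$; and it lies in $R_1 \cdot R_{11}$ as soon as $i+j \geq 1$, by peeling off a factor of $x_0$ or $x_1$. The only remaining case is $y^6$. Here $\partial G/\partial y = -\partial F/\partial y$ has the form $-5c\,y^4 + x_0^2 P + x_1^2 Q$, with $c$ the nonzero $y^5$-coefficient of $F$ and $P,Q$ of weighted degree $6$; modulo $J_G$ this places $y^4$ inside $x_0 R_7 + x_1 R_7$, and multiplying by $y^2$ (which has weighted degree $4$) exhibits $y^6$ inside $x_0 R_{11} + x_1 R_{11} = R_1 \cdot R_{11}$. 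Hence $R_1 \cdot R_{11} = R_{12}$, proving the injectivity of $p$.

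The delicate point in this strategy is the identification of $R_{10}$ with (or at least a surjection onto) $H^1(S, \calT_S)$: this is where the specific geometry of $\bP(1,1,2,5)$ and the non-reductive toric translations $y \mapsto y + q(x_0, x_1)$ enter, and it is precisely what Usui's ``certain conditions'' in \cite{Usui_wci} are designed to guarantee. The same step explains why the authors advertise an alternative proof as a boundary case of Pardini's infinitesimal Torelli criterion for double covers \cite[Thm.~3.1]{Pardini_surface}, applied to $\varphi_0 \colon S_0 \to X_0$: that approach sidesteps the weighted-ambient bookkeeping entirely but requires a careful analysis of the exceptional $(-1)$-curves $E_1, \ldots, E_5$ appearing in the branch locus of $\varphi_0$.
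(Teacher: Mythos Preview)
Your main argument is correct and is precisely the paper's first proof made explicit: the paper invokes Usui's theorem after checking that $z^2 - F(x_0^2, x_1^2, y), x_0, x_1, y$ is a regular sequence in $\bC[x_0,x_1,y,z]$, while you unpack the underlying Jacobian-ring multiplication and verify the surjectivity of $R_1 \otimes R_{11} \to R_{12}$ by hand. Your monomial reduction (using $z \equiv 0$ in $R$ and the $y^5$-term of $F$ to handle $y^6$) is sound, and you correctly flag the identification $R_{10} \twoheadrightarrow H^1(S,\calT_S)$ as the point requiring Usui's hypotheses. Note that surjectivity here is all you need: if the composite $R_{10} \to H^1(S,\calT_S) \to \Hom(R_1, R_{11})$ is injective, both arrows are.

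One correction regarding the alternative proof. The paper's second argument does \emph{not} apply Pardini's criterion to the double cover $\varphi_0 \colon S_0 \to X_0$ of the $K3$; it applies the full abelian-cover machinery of \cite{Pardini_surface, Pardini_torelli} directly to the bidouble cover $\pi \colon S \to \bP^2$. The infinitesimal Torelli map is decomposed along the four characters of $(\bZ/2\bZ)^2$ into pieces $p_{\chi,\phi} = r_{\chi\phi,\phi} \circ q_{\chi,\phi}$ built from logarithmic sheaves on $\bP^2$ twisted by the building data $\calL_{\chi_0} = \calL_{\chi_1} = \calO_{\bP^2}(3)$, $\calL_{\chi_z} = \calO_{\bP^2}(1)$, and injectivity is checked character by character using prolongation bundles and a multiplication lemma of Ein--Lazarsfeld. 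The exceptional $(-1)$-curves on $S_0$ never enter. This buys a proof that lives entirely on $\bP^2$ and avoids weighted-projective bookkeeping, at the cost of a somewhat lengthy case analysis over the character group.
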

\begin{proof}
Usui \cite{Usui_wci} has proved the infinitesimal Torelli theorem for the periods of holomorphic $d$-forms on certain $d$-dimensional complete intersections ($d \geq 2$) in certain weighted projective spaces. One can check that the conditions in op. cit. Theorem $2.1$ are satisfied for the special Horikawa surface $S$ and then apply the theorem. Specifically, by \propositionref{S equation} the surface $S \subset \bP(1,1,2,5)$ is defined by $z^2 = F(x_0^2, x_1^2, y)$ and it is not difficult to verify that $z^2 - F(x_0^2, x_1^2, y), x_0, x_1, y$ forms a regular sequence for $\bC[x_0,x_1,y,z]$. (One can apply op. cit. Proposition 3.1 to check the conditions and prove the infinitesimal Torelli for the periods of holomorphic $2$-forms on any smooth hypersurface with ample canonical bundle in $\bP(1,1,2,5)$.)   
\end{proof}

Pardini \cite[Thm 3.1]{Pardini_surface} \cite[Thm 4.2]{Pardini_torelli} has considered the infinitesimal Torelli problem for certain abelian covers (including bidouble covers). The conditions of the theorems do not hold for special Horikawa surfaces $S$ but the strategy still works. (In particular, we need the notation of prolongation bundle discussed in \cite[\S 2]{Pardini_torelli}.) Let us sketch the proof which might be useful for finding more boundary cases of Pardini's theorems. 

\begin{proof}
The idea is to decompose the infinitesimal Torelli map $$p: H^1(S, \calT_S) \rightarrow \Hom(H^0(S, \omega_S), H^1(S, \Omega_S^1))$$ using the Galois group action. The first step is to figure out the building data (see \cite[\S 2]{Catanese_bidouble} and \cite[Def. 2.1]{Pardini_cover}) of the bidouble cover $\pi: S \rightarrow \bP^2$. The (reduced) branched locus of $\pi$  consists of three irreducible components: $$D_0 := L_0, \,\, D_1 := L_1, \,\, \text{and} \,\, D_z := C.$$ Let $\sigma_0$, $\sigma_1$ be the involutions in Diagram (\ref{diagram}). Let $\sigma_z := \sigma_0 \circ \sigma_1$. The Galois group $G$ of the abelian cover $S \rightarrow \bP^2$ consists of $\id$, $\sigma_0$, $\sigma_1$, $\sigma_z$. Let $\chi_0$, $\chi_1$, $\chi_z$ be the corresponding nontrivial characters of $G$ (and we shall denote the character group by $G^*$). Write $$\pi_*\calO_S = \calO_{\bP^2} \oplus \calL_{\chi_0}^{-1} \oplus \calL_{\chi_1}^{-1} \oplus \calL_{\chi_z}^{-1}$$ where $\calL_{\chi}^{-1}$ denotes the eigensheaf on which $G$ acts via the character $\chi$. One easily verify that $\calL_{\chi_0} = \calL_{\chi_1} = \calO_{\bP^2}(3)$ and $\calL_{\chi_z} = \calO_{\bP^2}(1)$. 

Next we compute the direct images of various sheaves. Let $D = D_0 + D_1 + D_z$. For the trivial character $\chi=1$, define $\Delta_{1} = D$. Set $$\Delta_{\chi_0} := D_0, \,\, \Delta_{\chi_1} := D_1, \,\, \Delta_{\chi_z} := D_z.$$ We also let $$D_{1, 1^{-1}} := \emptyset, \,\, D_{\chi_0, \chi_0^{-1}} := D_1 + D_z, \,\, D_{\chi_1, \chi_1^{-1}} := D_0 + D_z, \,\, D_{\chi_z, \chi_z^{-1}} := D_0 + D_1. $$ (For every pairs of characters $\chi, \phi \in G^*$, $D_{\chi,\phi}$ is defined in \cite{Pardini_surface} and \cite{Pardini_torelli}. The fundamental relations of the building data are $\calL_{\chi} + \calL_{\phi} \equiv \calL_{\chi\phi} + D_{\chi,\phi}$, see \cite[Thm. 2.1]{Pardini_cover}.) For any character $\chi$, we have (see \cite[Prop. 4.1]{Pardini_cover})
\begin{itemize}
\item $(\pi_*\calT_S)^{(\chi)} = \calT_{\bP^2}(-\log \Delta_{\chi}) \otimes \calL_{\chi}^{-1}$ (In particular, $(\pi_*\calT_S)^{(\mathrm{inv})} = \calT_{\bP^2}(-\log D)$);
\item $(\pi_*\Omega^1_S)^{(\chi)} = \Omega_{\bP^2}^1(\log D_{\chi, \chi^{-1}}) \otimes \calL_{\chi}^{-1}$ (In particular, $(\pi_*\Omega^1_S)^{(\mathrm{inv})} = \Omega^1_{\bP^2}$);
\item $(\pi_*\omega_S)^{(\chi)} = \omega_{\bP^2}\otimes \calL_{\chi^{-1}}$ (In particular, $(\pi_*\omega_S)^{(\mathrm{inv})} = \omega_{\bP^2}$.
\end{itemize}

Since the map $\pi$ is finite, for every coherent sheaf $\calF$ on $S$ one has $H^k(S, \calF) = H^k(\bP^2, \pi_*\calF)$ ($k=0,1,2$). In particular, we have $$H^1(S, \calT_S) = H^1(\bP^2, \pi_*\calT_S), \,\, H^1(S, \Omega_S) = H^1(\bP^2, \pi_*\Omega_S), \,\, H^0(S, \omega_S) = H^0(\bP^2, \pi_*\omega_S).$$ Combining with the splittings of $\pi_*\calT_S$, $\pi_*\Omega_S$ and $\pi_*\omega_S$, we obtain the following decompositions:
\begin{equation}
H^1(S, \calT_S) = H^1(\bP^2, \calT_{\bP^2}(-\log D)) \oplus (\mathop{\oplus} \limits_{\chi \in G^* \backslash \{1\}} H^1(\bP^2, \calT_{\bP^2}(-\log \Delta_{\chi}) \otimes \calL_{\chi}^{-1}))
\end{equation}
\begin{equation}
H^1(S, \Omega^1_S) = H^1(\bP^2, \Omega^1_{\bP^2}) \oplus (\mathop{\oplus} \limits_{\chi \in G^* \backslash \{1\}} H^1(\bP^2, \Omega^1_{\bP^2}(\log D_{\chi, \chi^{-1}}) \otimes \calL_{\chi}^{-1}))
\end{equation}
\begin{equation}
H^0(S, \omega_S) = H^0(\bP^2, \omega_{\bP^2}) \oplus (\mathop{\oplus} \limits_{\chi \in G^* \backslash \{1\}} H^0(\bP^2, \omega_{\bP^2} \otimes \calL_{\chi^{-1}}))
\end{equation}

Since the cup product (and hence the infinitesimal Torelli map $p: H^1(S, \calT_S) \rightarrow \Hom(H^0(S, \omega_S), H^1(S, \Omega_S^1))$) is compatible with the group action, for characters $\chi, \phi \in G^*$ we consider 
\begin{equation}
\begin{split}
p_{\chi,\phi}:  & H^1(\bP^2, \calT_{\bP^2}(-\log \Delta_{\chi}) \otimes \calL_{\chi}^{-1})) \\
& \rightarrow \Hom(H^0(\bP^2, \omega_{\bP^2} \otimes \calL_{\phi^{-1}})
,H^1(\bP^2, \Omega^1_{\bP^2}(\log D_{\chi\phi, (\chi\phi)^{-1}}) \otimes \calL_{\chi\phi}^{-1})))
\end{split}
\end{equation}
Clearly, one has 
\begin{equation}
p = \mathop{\oplus} \limits_{\chi, \phi \in G^*} p_{\chi, \phi}.
\end{equation}
\begin{lemma} \label{p_{chi, phi}}
The infinitesimal Torelli holds for $S$ (i.e. the map $p$ is injective) if and only if $\mathop{\cap} \limits_{\phi \in G^*} \ker p_{\chi, \phi} = \{0\}$ for any character $\chi$.
\end{lemma}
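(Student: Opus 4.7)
The plan is to observe that all three Hodge-theoretic ingredients in $p$ carry a natural $G$-action, that cup product intertwines these actions, and that this forces $p$ to split as a direct sum indexed by the characters of $G$. Then the ``if and only if'' becomes a purely formal statement about kernels of direct sums.

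More precisely, I would first record the key compatibility: since $G$ is abelian and cup product is $G$-equivariant, for $\xi \in H^1(S,\calT_S)^{(\chi)}$ and $\omega \in H^0(S,\omega_S)^{(\phi)}$ the class $\xi \cup \omega$ lies in $H^1(S,\Omega^1_S)^{(\chi\phi)}$. Consequently, the cup product map
\[
H^1(S,\calT_S)\otimes H^0(S,\omega_S)\longrightarrow H^1(S,\Omega^1_S)
\]
decomposes as a direct sum of pieces $H^1(S,\calT_S)^{(\chi)}\otimes H^0(S,\omega_S)^{(\phi)}\to H^1(S,\Omega^1_S)^{(\chi\phi)}$, whose adjoints are exactly the maps $p_{\chi,\phi}$ defined in the statement. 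Under the decompositions of $H^1(S,\calT_S)$, $H^0(S,\omega_S)$, $H^1(S,\Omega^1_S)$ given above, $p$ is therefore the direct sum of the $p_{\chi,\phi}$ in the sense that for $\xi=\sum_\chi \xi_\chi$ and $\omega=\sum_\phi \omega_\phi$ one has
\[
p(\xi)(\omega) \;=\; \sum_{\chi,\phi} p_{\chi,\phi}(\xi_\chi)(\omega_\phi),
\]
with the $(\chi,\phi)$-summand lying in the distinct eigenspace $H^1(S,\Omega^1_S)^{(\chi\phi)}$.

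Second, I would unwind the vanishing condition $p(\xi)=0$. Because, for fixed $\phi$, the summands $p_{\chi,\phi}(\xi_\chi)(\omega_\phi)$ lie in pairwise distinct eigenspaces as $\chi$ varies, the equality $p(\xi)(\omega_\phi)=0$ for all $\omega_\phi \in H^0(S,\omega_S)^{(\phi)}$ is equivalent to $p_{\chi,\phi}(\xi_\chi)=0$ for every $\chi$. Letting $\phi$ vary as well, $p(\xi)=0$ is equivalent to $\xi_\chi \in \bigcap_{\phi\in G^*}\ker p_{\chi,\phi}$ for every character $\chi$. Thus $p$ is injective if and only if, for each $\chi\in G^*$, the intersection $\bigcap_{\phi\in G^*}\ker p_{\chi,\phi}$ vanishes, which is the stated criterion.

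There is essentially no obstacle here beyond bookkeeping: the only real input is the $G$-equivariance of cup product and the multiplicativity of characters on eigenspaces. The content of the lemma is simply that the infinitesimal Torelli problem for $S$ decomposes along the character decomposition of $H^1(S,\calT_S)$, reducing it to checking injectivity on each isotypic component separately. The genuine work, handled in the subsequent sections, is then to estimate the individual maps $p_{\chi,\phi}$ (via the prolongation-bundle machinery of \cite{Pardini_torelli}) and verify that the character-wise intersection of kernels vanishes.
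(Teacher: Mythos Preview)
Your argument is correct and is exactly the justification the paper has in mind: the paper states the decomposition $p = \bigoplus_{\chi,\phi} p_{\chi,\phi}$ with the word ``Clearly'' and then records the lemma without proof, treating the equivalence as an immediate formal consequence of that splitting. You have simply spelled out the bookkeeping (equivariance of cup product, distinctness of the target eigenspaces for fixed $\phi$) that the paper leaves implicit.
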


Let us take a closer look at the maps $p_{\chi,\phi}$. For every pairs of characters $\chi, \phi \in G^*$ and every section $\xi \in H^0(\bP^2, \omega_{\bP^2}\otimes \calL_{\phi^{-1}})$, consider the following diagram
\begin{equation*}
\begin{CD}
\calT_{\bP^2}(-\log \Delta_{\chi}) \otimes \calL_{\chi}^{-1}     @>>>  \Omega^1_{\bP^2}(\log D_{\chi\phi, (\chi\phi)^{-1}}) \otimes \omega_{\bP^2}^{-1} \otimes (\calL_{\chi\phi} \otimes \calL_{\phi^{-1}})^{-1}\\
@VVV        @VVV\\
\calT_{\bP^2}(-\log \Delta_{\chi}) \otimes \calL_{\chi}^{-1} \otimes \omega_{\bP^2}\otimes \calL_{\phi^{-1}}     @>>>  \Omega^1_{\bP^2}(\log D_{\chi\phi, (\chi\phi)^{-1}}) \otimes \calL_{\chi\phi}^{-1}
\end{CD}
\end{equation*} 
where the vertical maps are given by multiplication by $\xi$ and the horizontal maps are defined by contraction of tensors and by the fundamental relations \cite[Thm. 2.1]{Pardini_cover} of the building data (N.B. there is a canonical isomorphism between $\calT_{\bP^2}(-\log D_{\chi\phi, (\chi\phi)^{-1}})$ and $\Omega^1_{\bP^2}(\log D_{\chi\phi, (\chi\phi)^{-1}}) \otimes (\omega_{\bP^2}(D_{\chi\phi, (\chi\phi)^{-1}}))^{-1}$). Consider
\begin{equation}
\begin{split}
q_{\chi,\phi}: &H^1(\bP^2, \calT_{\bP^2}(-\log \Delta_{\chi}) \otimes \calL_{\chi}^{-1}) \\
& \rightarrow H^1(\bP^2, \Omega^1_{\bP^2}(\log D_{\chi\phi, (\chi\phi)^{-1}}) \otimes \omega_{\bP^2}^{-1} \otimes (\calL_{\chi\phi} \otimes \calL_{\phi^{-1}})^{-1})
\end{split}
\end{equation}
and 
\begin{equation}
\begin{split}
r_{\chi,\phi}: &H^1(\bP^2, \Omega^1_{\bP^2}(\log D_{\chi, \chi^{-1}}) \otimes (\omega_{\bP^2} \otimes \calL_{\chi} \otimes \calL_{\phi^{-1}})^{-1}) \\
& \rightarrow \Hom(H^0(\bP^2, \omega_{\bP^2} \otimes \calL_{\phi^{-1}})
,H^1(\bP^2, \Omega^1_{\bP^2}(\log D_{\chi, \chi^{-1}}) \otimes \calL_{\chi}^{-1})))
\end{split}
\end{equation}
Obviously we have $p_{\chi,\phi} = r_{\chi\phi,\phi} \circ q_{\chi, \phi}$. 

Let us analyze the maps $r_{\chi,\phi}$.
\begin{itemize}
\item The maps $r_{1,1}$, $r_{1,\chi_0}$, $r_{1,\chi_1}$, $r_{1,\chi_z}$ are injective: by explicit computation and Bott's vanishing theorem. 
\item The maps $r_{\chi_0,1}$, $r_{\chi_1,1}$, $r_{\chi_z,1}$, $r_{\chi_0,\chi_z}$, $r_{\chi_1,\chi_z}$ are zero maps: in these cases $H^0(\bP^2, \omega_{\bP^2} \otimes \calL_{\phi^{-1}}) = 0$.
\item The maps $r_{\chi_0,\chi_0}$, $r_{\chi_0,\chi_1}$, $r_{\chi_1,\chi_0}$, $r_{\chi_1,\chi_1}$ are injective: we use the prolongation bundle of the irreducible components of $D_{\chi,\chi^{-1}}$ defined in \cite[\S 2]{Pardini_torelli} and show that if the multiplication map \begin{equation*}
\begin{split}
& H^0(\bP^2, \omega_{\bP^2} \otimes \calL_{\phi^{-1}}) \otimes (\mathop{\oplus} \limits_{
\substack{
B \, \text{irreducible} \\
\text{components of} \, D_{\chi,\chi^{-1}}}} H^0(\bP^2, \calO_{\bP^2}(B) \otimes\omega_{\bP^2} \otimes \calL_{\chi})) \\
& \rightarrow \mathop{\oplus} \limits_{
\substack{
B \, \text{irreducible} \\
\text{components of} \, D_{\chi,\chi^{-1}}}} H^0(\bP^2, \calO_{\bP^2}(B) \otimes \omega_{\bP^2}^{\otimes 2} \otimes \calL_{\chi} \otimes \calL_{\phi^{-1}})
\end{split}
\end{equation*}
is surjective, then the map $r_{\chi,\phi}$ is injective (cf. \cite[\S 3]{Pardini_torelli}). As argued in \cite[Prop. 3.5]{Pardini_torelli} the surjectivity of the multiplication map follows from a special case of \cite[Thm. 2.1]{EL}.
\item The maps $r_{\chi_z,\chi_0}$, $r_{\chi_z,\chi_1}$, $r_{\chi_z,\chi_z}$ are injective: we also consider the prolongation bundle, and also note that $H^0(\bP^2, \calO_{\bP^2}(B) \otimes \omega_{\bP^2}^{\otimes 2} \otimes \calL_{\chi} \otimes \calL_{\phi^{-1}}) = 0$ (with $B$ any irreducible component of $D_{\chi,\chi^{-1}}$) in these cases.  
\end{itemize}

We use the method of \cite[\S 3]{Pardini_surface} to study the maps $q_{\chi,\phi}$ (especially Diagram $(3.4)$ and Lemma $3.1$). The results are as follows.
\begin{itemize}
\item For $\chi = 1$, we have $\ker q_{1,\chi_0} \cap \ker q_{1, \chi_1} = \{0\}$ (and hence $\ker p_{1,\chi_0} \cap \ker p_{1, \chi_1} = \{0\}$).
\item For $\chi = \chi_0$, both $q_{\chi_0,\chi_1}$ and $q_{\chi_0,\chi_z}$ are injective (and hence $p_{\chi_0,\chi_1} = r_{\chi_z,\chi_1} \circ q_{\chi_0, \chi_1}$ is injective).
\item For $\chi = \chi_1$, both $q_{\chi_1,\chi_0}$ and $q_{\chi_1,\chi_z}$ are injective (and hence $p_{\chi_1,\chi_0} = r_{\chi_z,\chi_0} \circ q_{\chi_1, \chi_0}$ is injective).
\item For $\chi = \chi_z$, both $q_{\chi_z,\chi_0}$ and $q_{\chi_z,\chi_1}$ are injective (and hence $p_{\chi_z,\chi_0} = r_{\chi_1,\chi_0} \circ q_{\chi_z, \chi_0}$ is injective, and $p_{\chi_z,\chi_1} = r_{\chi_0,\chi_1} \circ q_{\chi_z, \chi_1}$ is also injective).
\end{itemize}

The theorem clearly follows from these observations.
\end{proof}

\section{Degree $5$ pairs and a generic global Torelli theorem} \label{sec_globaltorelli}
Let us review the period map for degree $5$ pairs which will be used later in this section to prove a generic global Torelli problem for special Horikawa surfaces $S$.  

Following \cite[Def. 2.1]{Laza_n16} we call a pair $(C,L)$ consisting of a plane quintic curve $C$ and a line $L \subset \bP^2$ a {\sl degree $5$ pair}. Two such pairs are equivalent if they are projectively equivalent. We are interested in the degree $5$ pairs $(C, L)$ with $C+L$ defining a sextic curve admitting at worst $ADE$ singularities. The coarse moduli space $\calM_{ADE}$ is contained in the GIT quotient $(\bP H^0(\bP^2, \calO_{\bP^2}(5)) \times \bP H^0(\bP^2, \calO_{\bP^2}(1))) \git \SL_3(\bC)$ (with respect to the linearization $\pi_1^*\calO_{\bP^2}(1) \otimes \pi_2^*\calO_{\bP^2}(1)$).  
 
To every degree $5$ pair $(C,L)$ such that $C+L$ has at worst $ADE$ singularities, we associate a $K3$ surface $X_{(C,L)}$ obtained by taking the canonical resolution of the double cover $\bar{X}_{(C,L)}$ of $\bP^2$ along the sextic $C+L$. The period map for degree $5$ pairs $(C,L)$ is defined via the periods of $X_{(C,L)}$. Specifically, one first considers a generic pair $(C,L)$ where $C$ is smooth and $L$ is a transversal. The $K3$ surface $X_{(C,L)}$ contains $5$ exceptional curves $e_1, \cdots, e_5$ corresponding to the five intersection points $C \cap L$ and the strict transform $l'$ of $L$. By \cite[Prop. 4.12, Def. 4.17]{Laza_n16}, the lattice generated by $\{l', e_1, \cdots, e_5\}$ is a primitive sublattice of $\Pic(X_{(C,L)})$ and hence $X_{(C,L)}$ is a lattice polarized $K3$ surface (note that the lattice polarization depends on the labeling of the points of intersection $C \cap L$).  

\begin{notation}
Let $\Lambda$ be an even lattice. We define:
\begin{itemize}
\item $\Lambda^*$: the dual lattice;
\item $A_{\Lambda} = \Lambda^*/\Lambda$: the discriminant group endowed with the induced quadratic form $q_{\Lambda}$;
\item $O(\Lambda)$: the group of isometries of $\Lambda$;
\item $O(q_\Lambda)$: the automorphisms of $A_\Lambda$ that preserves the quadratic form $q_{\Lambda}$;
\item $O_{-}(\Lambda)$: the subgroup of isometries of $\Lambda$ of spinor norm $1$ (see also \cite[\S 3.6]{Scattone_bb});
\item $\widetilde{O}(\Lambda) =\ker(O(\Lambda) \rightarrow O(A_{\Lambda}))$: the group of isometries of $\Lambda$ that induce the identity on $A_{\Lambda}$;
\item $O^*(\Lambda): = O_-(\Lambda) \cap \widetilde{O}(\Lambda)$.
\end{itemize}
We also introduce:
\begin{itemize}
\item $\Lambda_{K3}$: the $K3$ lattice $U^{\oplus 3} \oplus E_8^{\oplus 2}$ (we denote the bilinear form by $(\cdot,\cdot)_{K3}$);
\item $M$: the abstract lattice generated by $\{l', e_1, \cdots, e_5\}$ which admits a unique primitive embedding into $\Lambda_{K3}$ (one can also show that $M$ is the generic Picard group of $K3$ surfaces $X_{(C,L)}$ and $M \cong U(2) \oplus D_4$, see \cite[Cor. 4.15, Lem. 4.18]{Laza_n16}); 
\item $T = M^{\perp}_{\Lambda_{K3}}$: the orthogonal complement of $M$ in $\Lambda_{K3}$ (which is isomorphic to $U \oplus U(2) \oplus D_4 \oplus E_8$);
\item $\calD_M := \{\omega \in \bP(T\otimes \bC) \mid (\omega, \omega)_{K3} = 0, (\omega, \bar{\omega})_{K3} > 0\}$ which is the period domain for $M$-polarized $K3$ surfaces;
\item $\calD_M^0:$ a connected component of $\calD_M$ which is a type IV Hermitian symmetric domain.
\end{itemize}
\end{notation}

Let $\calU$ be an open subset of $\calM_{ADE}$ parameterizing the generic degree $5$ pairs $(C, L)$ with $C$ smooth and with transverse intersections $C \cap L$. Let $\widetilde{\calU}$ be the $\mathfrak{S}_5$-cover of $\calU$ that consists of triples $(C,L,\sigma)$ with $\sigma: \{1,\cdots, 5\} \rightarrow C \cap L$ labelings of $C \cap L$. By \cite{Dolgachev_latticek3} there is a period map $\widetilde{\calU} \rightarrow \calD_M^0/O^*(T)$ sending $(C,L,\sigma)$ to the periods of $X_{(C,L)}$ with the $M$-polarization determined by $\sigma$. By the global Torelli theorem and surjectivity of the period map for $K3$ surfaces and \cite[Prop. 4.14]{Laza_n16} the period map is birational. Note that there is a natural $\mathfrak{S}_5$-action on $\widetilde{\calU}$. Moreover, the group $O^*(T)$ is a normal subgroup of $O_-(T)$ with $O_-(T)/O^*(T) \cong \mathfrak{S}_5$, and the residual $\mathfrak{S}_5$-action on $M$ is the permutation of the five points of intersection $C \cap L$ (op. cit. Proposition 4.22). (In fact, $O_-(T)$ is the monodromy group for the degree $5$ pairs.) Thus, the period map is $\mathfrak{S}_5$-equivariant and descends to a birational map $\calU \rightarrow \calD_M^0/O_-(T)$.

The birational map can be extended to a morphism $\calM_{ADE} \rightarrow \calD_M^0/O_-(T)$ using normalized $M$-polarizations. In particular, one needs to construct an $M$-polarization in the case of non-transversal intersections $C \cap L$. We briefly summarize the construction and refer the readers to \cite[\S4.2.3]{Laza_n16} for the details. The construction is a modification of canonical resolution of  singularities of double covers (see \cite[Thm. III.7.2]{BPV}). The role of modification is to keep track of the points of intersection $C \cap L$. More precisely, one chooses a labeling of the intersection $\sigma: \{1,2,3,4,5\} \twoheadrightarrow C \cap L$ such that for any $p \in C \cap L$ we have $|\sigma^{-1}(p)| = \mathrm{mult}_p(C \cap L)$. Set $Y_0 = \bP^2$ and $B_0 = C+L$. We blow up one singularity at a time (instead of doing simultaneous blow-ups) and do the first five blow-ups in points belonging to $L$. The new branched divisor $B_{i}$ is the strict transform of $B_{i-1}$ together with the exceptional divisor of the blow-up reduced mod $2$. The process is repeated until the resulting divisor $B_N$ is smooth. Denote the blow-up sequence by $Y_N \rightarrow \cdots \rightarrow Y_{i} \rightarrow Y_{i-1} \rightarrow \cdots \rightarrow Y_0=\bP^2$. The double cover $X_{(C,L)}$ of $Y_N$ along $B_N$ is a minimal resolution of $\bar{X}_{(C,L)}$. Let $p_i \in Y_{i-1}$ ($1 \leq i \leq 5$) be the centers of the blow-up which lies on the corresponding strict transform of $L$. Now we construct a primitive embedding of $M$ into $\Pic(X_{(C,L)})$ by sending $l$ to the class of the reduced preimage of $L$ and sending $e_i$ ($1 \leq i \leq 5$) to the fundamental cycle associated to the simple singularity of $B_{i-1}$ in the point $p_i$. The embedding is normalized in the sense of \cite[Def. 4.24]{Laza_n16} and the construction fits well in families. 

By the global Torelli theorem for $K3$, the surface $X_{(C,L)}$ is unique up to isomorphism. Moreover, one can recover the degree $5$ pair $(C,L)$ because the classes $2l'+e_1+\cdots+e_5$ (which corresponds to the pull-back of $\calO_{\bP^2}(1)$ and determines the covering map and the branched curve) and $l'$ (which determines the line $L$ and hence the residue quintic $C$) are fixed by the monodromy group $O_-(T)$. It follows that the period map $$\calM_{ADE} \hookrightarrow \calD_M^0/O_-(T)$$ for degree $5$ pairs $(C, L)$ with $C+L$ admitting at worst $ADE$ singularities is injective. This is the part we shall need later. For the completeness, let us mention that one can verify that the period map is surjective (see \cite[\S4.3.1]{Laza_n16}, especially Proposition 4.31). By Zariski's main theorem, the bijective birational morphism between two normal varieties $\calM_{ADE} \rightarrow \calD^0_M/O_-(T)$ is an isomorphism (op. cit. Theorem 4.1). 

Let us focus on the generic global Torelli problem for special Horikawa surfaces. By \cite{Catanese_bidouble} or \cite[Thm. 2.1]{Pardini_cover} we construct the coarse moduli space $\calM$ for special Horikawa surfaces as the open subset of the quotient\footnote{for the linearization induced by $\pi_1^*\calO_{\bP^2}(1) \otimes \pi_2^*\calO_{\bP^2}(1) \otimes \pi_3^*\calO_{\bP^2}(1)$} $$(\bP H^0(\bP^2, \calO_{\bP^2}(5)) \times \Sym^2 (\bP H^0(\bP^2, \calO_{\bP^2}(1)))) \git \SL_3(\bC)$$ corresponding to triples $(C, L, L')$ which consist of smooth quintics $C$ and transversals $L$ and $L'$ with $C \cap L \cap L' = \emptyset$. It is more convenient to work with a double cover $\calM'$ of $\calM$. Specifically, $\calM'$ is the open subset of the GIT\footnote{with respect to the linearization $\pi_1^*\calO_{\bP^2}(1) \otimes \pi_2^*\calO_{\bP^2}(1) \otimes \pi_3^*\calO_{\bP^2}(1)$} $$(\bP H^0(\bP^2, \calO_{\bP^2}(5)) \times \bP H^0(\bP^2, \calO_{\bP^2}(1)) \times \bP H^0(\bP^2, \calO_{\bP^2}(1))) \git \SL_3(\bC)$$ which parameterizes (up to projective equivalence) triples $(C, L_0, L_1)$ with $C$ smooth quintics and $L_0$, $L_1$ ``labeled" lines which intersect $C$ transversely and satisfy $C \cap L_0 \cap L_1 = \emptyset$.

Choose a sufficiently general reference point $b \in \calM'$ (in particular, we label the two lines) and let $S_b$ be the corresponding bidouble cover. Let $V = H_{\mathrm{prim}}^2(S_b, \bR)$ (with respect to the class of a canonical curve or equivalently a hyperplane section in $\bP(1,1,2,5)$). Let $Q$ be the polarization on $V$ defined using cup product. We also write $V_{\bQ} = H_{\mathrm{prim}}^2(S_b, \bQ)$. Consider the action of the Galois group $(\bZ/2\bZ)^2$ on $S_b$ and define $\rho: (\bZ/2\bZ)^2 \rightarrow \Aut(V, Q)$ to be the corresponding representation. Notations as in Diagram (\ref{diagram}). The Galois group $(\bZ/2\bZ)^2$ is generated by $\sigma_0$ and $\sigma_1$. Denote the corresponding characters by $\chi_0$ and $\chi_1$. 

\begin{notation}
We shall use the following notations:
\begin{itemize}
\item $\calD = \calD(V, Q)$: the period domain parameterizing $Q$-polarized Hodge structures of weight $2$ on $V$ with hodge numbers $[2,28,2]$;
\item $\calD^{\rho} = \{x \in \calD \mid \rho(a)(x) = x, \forall a \in  (\bZ/2\bZ)^2\}$;
\item $V(\chi)$: the eigenspace of $V$ corresponding to the character $\chi$ (it is not difficult to see that the eigenspaces $V(\chi)$ and $V(\chi')$ are orthogonal with respect to $Q$ if $\chi \neq \chi'$);
\item $V(\chi)_{\bQ} := V(\chi) \cap V_{\bQ}$; 
\item $\calD(\chi)$: the period domain $\calD(V(\chi), Q|_{V(\chi)})$ of type $[1,14,1]$. 
\end{itemize}
\end{notation}

\begin{lemma}
There is a natural map $\calD^{\rho} \rightarrow \calD(\chi_0) \times \calD(\chi_1)$ which is injective.
\end{lemma}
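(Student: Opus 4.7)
The plan is to observe that a $\rho$-invariant polarized Hodge structure on $V$ automatically splits along the character decomposition of the Galois action, and that the two pieces of this splitting together recover the original Hodge structure.

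First, I would invoke \propositionref{decomposition}, which shows that only the characters $\chi_0$ and $\chi_1$ occur in $V_{\bC}$ and that the primitive cohomology decomposes as
\[
V_{\bQ} \;=\; V(\chi_0)_{\bQ} \oplus V(\chi_1)_{\bQ},
\]
with the two summands mutually orthogonal under the polarization $Q$ (eigenspaces for distinct characters of a finite group acting by isometries are automatically $Q$-orthogonal). Restricting $Q$ to each summand is non-degenerate of signature of the appropriate type, giving the polarized vector spaces $(V(\chi_l), Q|_{V(\chi_l)})$ underlying $\calD(\chi_l)$, and the Hodge numbers $[1,14,1]$ on each factor are precisely those asserted in that proposition.

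Next, to construct the map, take any $F^{\bullet} \in \calD^{\rho}$. By hypothesis each $\rho(a)$ preserves both $Q$ and $F^{\bullet}$, so it acts as a morphism of polarized Hodge structures on $V$. It is a standard fact that the isotypic decomposition of a Hodge structure under a finite group of Hodge isometries is itself a decomposition of Hodge structures; applied here, this yields
\[
F^{p} \;=\; \bigl(F^{p} \cap V(\chi_0)_{\bC}\bigr) \,\oplus\, \bigl(F^{p} \cap V(\chi_1)_{\bC}\bigr) \qquad \text{for every } p.
\]
Hence the intersected filtration $F^{\bullet} \cap V(\chi_l)_{\bC}$ defines a point of $\calD(\chi_l)$ (with the correct Hodge numbers, again by \propositionref{decomposition}), and sending $F^{\bullet}$ to the pair of these restrictions gives the desired map $\calD^{\rho} \to \calD(\chi_0) \times \calD(\chi_1)$.

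Injectivity is then immediate from the displayed direct-sum formula: the full filtration on $V_{\bC}$ is reconstructed piece by piece from its intersections with the two eigenspaces. I do not anticipate a genuine obstacle here — the statement is essentially a formal consequence of the character decomposition already established, and the only thing that needed to be checked, namely that $\rho$-invariance forces the Hodge filtration to respect the eigenspace splitting, is the standard averaging argument for finite-group actions on Hodge structures.
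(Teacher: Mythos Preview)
Your proposal is correct and follows essentially the same approach as the paper's own proof: invoke \propositionref{decomposition} to see that only $\chi_0$ and $\chi_1$ occur, restrict a $\rho$-invariant polarized Hodge structure to each eigenspace, and observe that the original filtration is recovered from the two pieces. The paper's version is terser and defers the compatibility of the splitting with Hodge structures to \cite[\S7]{DK_ball}, whereas you spell out the averaging/isotypic-decomposition argument directly, but the content is the same.
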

\begin{proof}
The lemma follows from \propositionref{decomposition} and \cite[\S7]{DK_ball}. Specifically, only the characters $\chi_0$ and $\chi_1$ appear in the decomposition of the vector space $V$. Let $V \otimes \bC = V^{2,0} \oplus V^{1,1} \oplus V^{0,2}$ be a $Q$-polarized Hodge structure on $V$. The map is defined by sending the Hodge structure to the induced $Q$-polarized Hodge structures on $V(\chi_0)$ and $V(\chi_1)$ which is clearly injective. 
\end{proof}

Now we show that the period spaces $\calD(\chi_0)$ and $\calD(\chi_1)$ are both isomorphic to the period space $\calD_M$ for $M$-polarized $K3$ surfaces.

\begin{lemma} \label{reference point}
There exists an isomorphism ($l=0 \,\, \text{or}\,\,1$) $$(V(\chi_l)_{\bQ}, \frac12 Q) \cong (T \otimes \bQ, (\cdot,\cdot)_{K3} \otimes \bQ).$$
\end{lemma}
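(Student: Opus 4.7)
The plan is to use the left half of Diagram~(\ref{diagram}) to produce a $\bQ$-linear isometry
\[
\Phi\colon (T_{\bQ}(X_0), (\cdot,\cdot)_{K3}) \xrightarrow{\sim} (V(\chi_0)_{\bQ}, \tfrac12 Q);
\]
the case $l=1$ is handled symmetrically from the right half. Since the reference point $b\in\calM'$ is sufficiently general, the degree~$5$ pair $(C,L_1)$ lies in the open locus $\calU\subset\calM_{ADE}$, so $X_0$ is a generic $M$-polarized K3 surface and $T_{\bZ}(X_0)$ is isometric to $T=M^{\perp}_{\Lambda_{K3}}$ by construction; producing $\Phi$ as above therefore suffices.

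For each $\alpha\in T_{\bQ}(X_0)$, I first consider $\varphi_0^*\alpha\in H^2(S_0,\bQ)$. The curves $E_i\subset X_0$ lie in the branch locus of $\varphi_0$, so the projection formula gives $\varphi_0^*\alpha\cdot[\tilde E_i]=\alpha\cdot\varphi_{0\ast}[\tilde E_i]=\alpha\cdot E_i=0$ by transcendentality of $\alpha$. Hence $\varphi_0^*\alpha$ is orthogonal to every $[\tilde E_i]$, so lies in $f_0^*H^2(S,\bQ)$, and there is a unique $\gamma\in H^2(S,\bQ)$ with $f_0^*\gamma=\varphi_0^*\alpha$; set $\Phi(\alpha):=\gamma$. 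The class $\gamma$ is $\sigma_0^*$-invariant because $\varphi_0^*\alpha$ is and $f_0$ is $\sigma_0$-equivariant. To check $\gamma$ is $Q$-primitive, I combine the canonical bundle formula for the double cover $\varphi_0$ with the relation $D_0\sim 2D_1+\sum E_i$ on $X_0$ to get $K_{S_0}=\varphi_0^*\calL$, and write $f_0^*K_S=K_{S_0}-\sum\tilde E_i=\varphi_0^*\calL-\sum\tilde E_i$; then
\[
Q(\gamma,K_S)=\varphi_0^*\alpha\cdot(\varphi_0^*\calL-\textstyle\sum\tilde E_i)=2(\alpha\cdot\calL)-\sum(\alpha\cdot E_i)=0
\]
because $\alpha$ is transcendental while $\calL$ and the $E_i$ are algebraic. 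Proposition~\ref{decomposition} then forces $\Phi(\alpha)\in V(\chi_0)_{\bQ}$, since $V(\chi_0)_{\bQ}$ is the full $\sigma_0^*$-invariant subspace of $H^2_{\mathrm{prim}}(S,\bQ)$.

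For the pairing, $f_0^*$ is the pullback of a composition of blowdowns and so preserves intersection numbers, while $\varphi_0^*$ multiplies them by $\deg\varphi_0=2$, so
\[
Q(\Phi\alpha,\Phi\alpha')=f_0^*\gamma\cdot f_0^*\gamma'=\varphi_0^*\alpha\cdot\varphi_0^*\alpha'=2(\alpha,\alpha')_{K3},
\]
showing $\Phi$ is an isometry for the scaled form $\tfrac12 Q$. Injectivity of $\Phi$ is immediate from injectivity of $\varphi_0^*$ and $f_0^*$ on rational cohomology, and a dimension count (both sides have $\bQ$-dimension $16=1+14+1$ by Proposition~\ref{decomposition} and the rank of $T$) promotes $\Phi$ to a $\bQ$-linear isomorphism. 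The main hurdle is purely bookkeeping: tracking the Galois action, the branch data $D_0+\sum E_i$ and the exceptional divisors $\tilde E_i$ coherently through the composition of the double cover $\varphi_0$ with the contraction $f_0$, in particular verifying $K_{S_0}=\varphi_0^*\calL$, after which primitivity of $\gamma$ and the scaling factor $\tfrac12$ both fall out cleanly.
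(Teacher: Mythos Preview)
Your proof is correct and follows essentially the same route as the paper's: both pull back $T\otimes\bQ$ from $H^2(X_0,\bQ)$ to $H^2(S_0,\bQ)^{\sigma_0^*}$ via $\varphi_0^*$, identify the image with $V(\chi_0)_\bQ=H^2_{\mathrm{prim}}(S,\bQ)^{\sigma_0^*}$, and read off the factor $\tfrac12$ from $\deg\varphi_0=2$. You are simply more explicit than the paper about the passage through $f_0$ and the primitivity check against $K_S$, which the paper leaves implicit.
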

\begin{proof}
We take $\chi_0$ as an example. Let $S= S_b$ be the bidouble cover corresponding to the reference point $b \in \calM'$. Notations as in Diagram (\ref{diagram}). In particular, by abuse of notation $\sigma_0$ also denotes the involution relative to $\varphi_0: S_0 \rightarrow X_0$. Label the points of intersection $C \cap L_1$ (the isomorphism we shall describe does not depend on the labeling) and there is a primitive embedding of $M$ (and hence $T$) into $H^2(X_0,\bZ)$. Since $b$ is sufficiently general, $M$ is the Picard lattice of the $K3$ surface $X_0$ (\cite[Cor. 4.15]{Laza_n16}). Consider the composition of linear maps $$T \otimes \bQ \hookrightarrow H^2(X_0,\bQ) \stackrel{\varphi_0^*}{\rightarrow} H^2(S_0,\bQ)^{\sigma_0^*}.$$ The map $\varphi_0^*$ is an isomorphism of vector spaces. Let $D_1 \subset X_0$ be the strict transform of $L_1$ and set $E_1, \cdots, E_5$ to be the exceptional curves. Clearly, $T \otimes \bQ$ is the orthogonal complement of $\bQ[D_1] \oplus \bQ[E_1] \oplus \cdots \oplus \bQ[E_5]$ in $H^2(X_0,\bQ)$. Thus, $T \otimes \bQ$ is mapped onto $V(\chi_0)_\bQ = H^2_{\mathrm{prim}}(S, \bQ)^{\sigma_0^*}$ by $\varphi_0^*$. The claim on the bilinear forms is clear.
\end{proof}

\begin{corollary}
The period domain $\calD(\chi_l)$ has two connected components which are both isomorphic to the $14$-dimensional type IV Hermitian symmetric domain $\SO(2,14)/\mathrm{S}(\mathrm{O}(2) \times \mathrm{O}(14))$.
\end{corollary}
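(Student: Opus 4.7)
The plan is to deduce the corollary directly from \lemmaref{reference point} together with a signature computation. First, I would determine that $(V(\chi_l)_{\bR}, Q)$ has signature $(2, 14)$. By \lemmaref{reference point} this quadratic space is isometric, up to a positive rescaling by $2$, to $T \otimes \bR$ with the restriction of $(\cdot, \cdot)_{K3}$. Since $T \cong U \oplus U(2) \oplus D_4 \oplus E_8$ and, inside $\Lambda_{K3} = U^{\oplus 3} \oplus E_8^{\oplus 2}$ of total signature $(3, 19)$, the summands $E_8$ and $D_4$ are taken negative definite, the contributions are $(1,1) + (1,1) + (0,4) + (0,8) = (2, 14)$.

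Next, I would identify $\calD(\chi_l)$ with the classical model of a type IV domain. By \propositionref{decomposition} the Hodge numbers are $[1, 14, 1]$, so a point of $\calD(\chi_l)$ is determined by the isotropic line $V(\chi_l)^{2,0} = \bC\omega \subset V(\chi_l) \otimes \bC$, and the two Hodge--Riemann bilinear relations translate to $Q(\omega, \omega) = 0$ and $Q(\omega, \bar\omega) > 0$. This is exactly the projective affine model of the Hermitian symmetric domain of type IV attached to a real quadratic space of signature $(2, n)$ with $n = 14$.

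Finally, I would invoke the standard description of such a domain: the locus
\[
\{ [\omega] \in \bP(V(\chi_l) \otimes \bC) : Q(\omega, \omega) = 0,\ Q(\omega, \bar\omega) > 0 \}
\]
has exactly two connected components, distinguished by the orientation of the positive-definite real $2$-plane $\Span_{\bR}(\re \omega, \im \omega) \subset V(\chi_l)_{\bR}$; the identity component of $\SO(2,14)$ acts transitively on each component with isotropy $\mathrm{S}(\mathrm{O}(2) \times \mathrm{O}(14))$, so each component is isomorphic to the $14$-dimensional Hermitian symmetric domain of type IV (compare \cite[\S 3]{Scattone_bb}). Applying this with $n = 14$ yields the corollary.

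There is essentially no serious obstacle in this argument; the only point requiring mild care is the sign convention under which $D_4$ and $E_8$ appear as negative-definite summands of $\Lambda_{K3}$, after which the conclusion is a routine fact about period domains of type IV.
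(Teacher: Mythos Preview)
Your argument is correct and is exactly the reasoning the paper intends: the corollary is stated without proof immediately after \lemmaref{reference point}, so it is meant to follow from that lemma together with the standard description of type IV domains, which is precisely what you spell out. There is nothing to add.
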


To formulate the theorem we also need to choose a discrete group. Let $\Gamma_0$ (resp. $\Gamma_1$) be the discrete subgroup of $\Aut(V(\chi_0)_{\bQ}, Q)$ (resp. $\Aut(V(\chi_1)_{\bQ}, Q)$) corresponding to $O_{-}(T)$ (using \lemmaref{reference point}). Set $\Gamma$ to be the discrete subgroup in $\Aut(V_{\bQ}, Q)$ which projects onto $\Gamma_0$ and $\Gamma_1$ under the isomorphism $V_{\bQ} = V(\chi_0)_{\bQ} \oplus V(\chi_1)_{\bQ}$. Now we consider the period map $$\calP: \calM \rightarrow \calD^{\rho}/\Gamma$$ for special Horikawa surfaces (which are canonically polarized). Because the monodromy group (for the very general base point) is contained in the generic Mumford-Tate group and $\sigma_0^*$ and $\sigma_1^*$ are Hodge tensors for every member of the family, the monodromy representation commutes with the representation $\rho$ (see also \cite[pp.67-68]{GGK_mt} and \cite[\S 7]{DK_ball}). By \cite[Prop. 4.22]{Laza_n16} the discrete subgroup $\Gamma$ contains the image of the monodromy representation. 

To prove the generic global Torelli theorem for $\calP: \calM \rightarrow \calD^{\rho}/\Gamma$ we consider the map $$\calP_0 \times \calP_1: \calM' \rightarrow \calD^0_M/O_{-}(T) \times \calD^0_M/O_{-}(T)$$ which is defined using the period maps for the degree $5$ pairs $(C, L_0)$ and $(C, L_1)$.
\begin{proposition} \label{P0P1}
The map $\calP_0 \times \calP_1: \calM' \rightarrow \calD^0_M/O_{-}(T) \times \calD^0_M/O_{-}(T)$ is generically injective.
\end{proposition}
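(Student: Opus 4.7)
The plan is to reduce the generic injectivity of $\calP_0 \times \calP_1$ to two inputs: Laza's isomorphism $\calM_{ADE} \cong \calD^0_M/O_{-}(T)$ recalled earlier in this section, together with the classical fact that a very general smooth plane quintic has trivial projective stabilizer.

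Suppose $[(C, L_0, L_1)]$ and $[(C', L_0', L_1')]$ in $\calM'$ share the same image under $\calP_0 \times \calP_1$. By construction, each $\calP_l$ factors as the forgetful map $\calM' \to \calM_{ADE}$, $(C, L_0, L_1) \mapsto (C, L_l)$, followed by the period map for degree $5$ pairs. Since $C$ is smooth and $L_l$ meets $C$ transversely, the sextic $C + L_l$ has only five nodes, so the image does lie in $\calM_{ADE}$ and Laza's theorem applies. Hence $(C, L_l)$ and $(C', L_l')$ are projectively equivalent as degree $5$ pairs for each $l = 0, 1$, i.e., there exist $g_0, g_1 \in \PGL_3(\bC)$ with $g_l(C) = C'$ and $g_l(L_l) = L_l'$. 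The element $h := g_1^{-1} g_0$ then satisfies $h(C) = C$, so it lies in the projective stabilizer $\Stab_{\PGL_3(\bC)}(C)$.

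I would then restrict attention to the Zariski open subset of $\calM'$ parameterizing triples whose underlying quintic $C$ has trivial projective stabilizer. Two facts make this subset open and dense: first, for a smooth plane curve of degree $\geq 4$ every abstract automorphism extends uniquely to a projective automorphism, because the embedding linear system $|\calO_C(1)|$ is intrinsic to $C$; second, a very general smooth curve of genus $\geq 3$ has trivial automorphism group, and since the forgetful map from $\calM'$ to the moduli of smooth plane quintics is dominant, a very general plane quintic already has trivial abstract automorphism group, hence trivial $\PGL_3(\bC)$-stabilizer. On this open dense subset we obtain $h = 1$, so $g_0 = g_1$, and $g_0$ realizes a projective equivalence $(C, L_0, L_1) \sim (C', L_0', L_1')$, proving generic injectivity. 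The main technical point is the triviality of the generic projective stabilizer of a plane quintic; the rest of the argument is short precisely because Laza's result provides an isomorphism of moduli spaces, so equal periods translate directly into honest projective equivalences rather than merely birational ones.
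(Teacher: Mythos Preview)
Your argument is essentially identical to the paper's: both use Laza's Torelli for degree $5$ pairs to obtain two projective equivalences $g_0,g_1$ (the paper calls them $f,g$), observe that $g_1^{-1}g_0$ stabilizes $C$, and conclude $g_0=g_1$ on the locus where $C$ has trivial projective stabilizer. One minor point: your justification that a very general plane quintic has trivial automorphism group does not follow from the statement about very general curves in $M_g$, since smooth plane quintics form a proper $12$-dimensional sublocus of the $15$-dimensional $M_6$; the correct (and standard) argument is that any automorphism of a smooth plane curve of degree $\geq 4$ is linear, and the locus of quintics fixed by a nontrivial element of $\PGL_3(\bC)$ has positive codimension.
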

\begin{proof}
By \cite{Laza_n16} Section 4.2.3 or Theorem 4.1, one can recover a degree $5$ pair $(C, L)$ (up to projective equivalence) from the periods of the $K3$ surface $X_{(C,L)}$. As a result, the isomorphism class of a triple $(C,L_0,L_1)$ is determined by the periods of the $K3$ surfaces $X_{(C,L_0)}$ and $X_{(C,L_1)}$ provided that the quintic $C$ has no nontrivial automorphism. More specifically, we assume that $(\calP_0 \times \calP_1)(C,L_0,L_1) = (\calP_0 \times \calP_1)(C',L'_0,L'_1)$. Then there exist $f,g \in \PGL_3(\bC)$ such that $f(C)=C'$, $f(L_0) = L_0'$, $g(C)=C'$, and $g(L_1) = L_1'$. In particular, one has $(g^{-1} \circ f)(C) = C$. Because $\Aut(C) = \{\id\}$ we get $f=g$. Thus, the map $\calP_0 \times \calP_1: \calM' \rightarrow \calD^0_M/O_{-}(T) \times \calD^0_M/O_{-}(T)$ is generically injective. 
\end{proof} 

\begin{theorem}
The period map $\calP: \calM \rightarrow \calD^{\rho}/\Gamma$ is generically injective.
\end{theorem}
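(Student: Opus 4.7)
The plan is to deduce generic injectivity of $\calP$ from \propositionref{P0P1} (generic injectivity of $\calP_0 \times \calP_1$ on the labeled double cover $\calM'$) by a diagram chase built on the eigenspace decomposition already in place. By \propositionref{decomposition}, $V_{\bQ} = V(\chi_0)_{\bQ} \oplus V(\chi_1)_{\bQ}$, and by construction $\Gamma = \Gamma_0 \times \Gamma_1$ preserves this splitting. By \lemmaref{reference point}, $(V(\chi_0)_{\bQ}, \tfrac12 Q)$ (resp.\ $(V(\chi_1)_{\bQ}, \tfrac12 Q)$) is identified with the transcendental lattice of the $K3$ surface $X_0 = X_{(C, L_1)}$ (resp.\ $X_1 = X_{(C, L_0)}$). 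Consequently the composition
$\calD^{\rho}/\Gamma \hookrightarrow \calD(\chi_0)/\Gamma_0 \times \calD(\chi_1)/\Gamma_1 \cong (\calD_M^0/O_-(T))^2$
sends the period of a special Horikawa surface to the ordered pair of periods of its two associated lattice-polarized $K3$'s.

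Using this identification, I would assemble the following commutative square: the left vertical is the $2{:}1$ forgetful cover $\calM' \to \calM$, whose deck involution $\iota$ swaps the labels $L_0 \leftrightarrow L_1$; the top row is $\calP_0 \times \calP_1: \calM' \to (\calD_M^0/O_-(T))^2$; the bottom row is $\calP$ followed by the embedding above. Because $V(\chi_0)$ records $X_{(C, L_1)}$ while $V(\chi_1)$ records $X_{(C, L_0)}$, the two routes from $\calM'$ to $(\calD_M^0/O_-(T))^2$ agree up to the swap of the two factors, which is exactly the action that $\iota$ induces on the source.

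Now pick an open dense $U' \subset \calM'$ on which $\calP_0 \times \calP_1$ is injective, shrink it to be $\iota$-invariant and contained in the locus where $C$ has trivial automorphism group, and let $U \subset \calM$ be its image. Given $[S], [S'] \in U$ with $\calP([S]) = \calP([S'])$, choose lifts $b, b' \in U'$. Since $\Gamma$ acts block-diagonally, equality in $\calD^{\rho}/\Gamma$ is equivalent to componentwise equality in $\calD(\chi_0)/\Gamma_0 \times \calD(\chi_1)/\Gamma_1$. Transporting through the diagram, this is precisely $(\calP_0 \times \calP_1)(b) = (\calP_0 \times \calP_1)(b')$, possibly after replacing $b'$ by $\iota(b')$ to absorb the swap of factors. \propositionref{P0P1} then forces $b = b'$ or $b = \iota(b')$, and in either case $[S] = [S']$ in $\calM$.

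The main obstacle I anticipate is the careful bookkeeping between the deck involution $\iota$ on $\calM'$ and the swap of the two factors $\calD(\chi_0)/\Gamma_0$ and $\calD(\chi_1)/\Gamma_1$ on the period side, together with confirming that this swap is the only ambiguity not already absorbed by $\Gamma$ (since the definition of $\Gamma$ only requires projection onto each $\Gamma_l$). Once this compatibility is pinned down and the open set $U$ is chosen so that both of its preimages in $\calM'$ lie in $U'$, the reduction to \propositionref{P0P1} is essentially mechanical.
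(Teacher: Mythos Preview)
Your proposal is correct and follows essentially the same route as the paper's proof: both arguments pull $\calP$ back to the labeled cover $\calM'$, identify the resulting eigenperiod map with $\calP_0 \times \calP_1$ (up to the factor swap coming from the indexing convention $V(\chi_0) \leftrightarrow X_{(C,L_1)}$), and then invoke \propositionref{P0P1} together with the $2{:}1$ cover $\calM' \to \calM$ to conclude. The paper spends most of its effort making the identification explicit by constructing a $\rho$-marking on $H^2_{\mathrm{prim}}(S,\bQ)$ out of the two $K3$ markings $\phi_0,\phi_1$ and checking that the eigenperiods are literally the pullbacks of the $K3$ periods; your diagram-chase packaging is a compressed version of the same verification, and the ``obstacle'' you flag (matching the deck involution $\iota$ with the swap of factors, and confirming this is the only ambiguity beyond $\Gamma = \Gamma_0 \times \Gamma_1$) is exactly the point the paper addresses when it says the eigenperiods remain the same under relabeling while the two $K3$ periods get switched.
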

\begin{proof}
Let $\calP': \calM' \rightarrow \calD^{\rho,0}/\Gamma \hookrightarrow \calD^{0}(\chi_0)/\Gamma_0 \times \calD^{0}(\chi_1)/\Gamma_1$ (where the superscript $^0$ denotes the choice of a connected component) be the map sending a labeled triple $(C,L_0,L_1)$ to the periods of the corresponding special Horikawa surface $S$ (and to the eigenperiods on the underlying eigenspaces $V(\chi_0)$ and $V(\chi_1)$). We would like to compare $\calP'$ and $\calP_0 \times \calP_1$. Note that the transcendental lattice $T$ has been identified with the invariant parts of the underlying vector space $V_\bQ = H^2_{\mathrm{prim}}(S_b, \bQ)$ for the involutions $\sigma_l^*$ ($l=0,1$) via the natural pull-backs (see \lemmaref{reference point}). We claim that under these identifications $\calP'$ coincides with $\calP_0 \times \calP_1$ up to the order of the periods (after relabeling the two lines one needs to switch the periods of the two $K3$ surfaces, but the period of the special Horikawa surface and the eigenperiods remain the same). The reason is that the eigenperiods are obtained by pulling back the holomorphic $2$-forms of the two $K3$ surfaces $X_0 = X_{(C,L_1)}$ and $X_1 = X_{(C,L_0)}$ (see Diagram (\ref{diagram}) and \propositionref{decomposition}). Specifically, we choose the orderings for the intersection points $C \cap L_1$ (resp. $C \cap L_0$) and define a primitive embedding of $M$ (and also $T$) into $H^2(X_0, \bZ)$ (resp. $H^2(X_1, \bZ)$) accordingly. Let $\phi_l: \Lambda_{K3} \stackrel{\cong}{\rightarrow} H^2(X_l, \bZ)$ ($l=0,1$) be the markings compatible with the primitive embeddings of $M$. The map $\calP_0 \times \calP_1$ is defined by considering (the $O_-(T)$-orbits of) the $K3$ periods $\phi_{0, \bC}^{-1}(H^{2,0}(X_0))$ and $\phi_{1, \bC}^{-1}(H^{2,0}(X_1))$. Now let us discuss the (eigen)period map $\calP'$. We denote $\sigma_0$ (resp. $\sigma_1$) in Diagram (\ref{diagram}) by $\sigma$ (resp. $\sigma'$) to reflect the fact that they do not depend on how one labels the lines. Let $\chi$ and $\chi'$ be the corresponding characters. We now construct a natural $\rho$-marking (cf. \cite[\S 7]{DK_ball}) on $H_{\mathrm{prim}}^2(S, \bQ)$ using the $K3$ markings $\phi_0$ and $\phi_1$. On one hand, \lemmaref{reference point} gives us $V(\chi)_{\bQ} \cong T \otimes \bQ$ and $V(\chi')_{\bQ} \cong T \otimes \bQ$. On the other hand, one has $T \otimes \bQ \stackrel{\phi_{0}}{\rightarrow} H^2(X_0, \bQ) \stackrel{\cong}{\rightarrow} H^2(S_0, \bQ)^{\sigma^*}$ where the second homomorphism is induced by the natural pull-back map. As in the proof of \lemmaref{reference point} one can show that the image of the composition (which is clearly injective) is $H^2_{\mathrm{prim}}(S, \bQ)^{\sigma^*}$. In other words, we get $T \otimes \bQ \stackrel{\cong}{\rightarrow} H^2_{\mathrm{prim}}(S, \bQ)^{\sigma^*}$. Similarly, the $K3$ marking $\phi_1$ and the natural pull-back map allows us to identify $T \otimes \bQ$ and $H^2_{\mathrm{prim}}(S, \bQ)^{\sigma'^*}$. Combining these observations, we get the markings $V(\chi)_\bQ \stackrel{\cong}{\rightarrow} H^2(S_0, \bQ)^{\sigma^*}$ and $V(\chi')_\bQ \stackrel{\cong}{\rightarrow} H^2(S_0, \bQ)^{\sigma'^*}$. Taking the preimage of the holomorphic $2$-form invariant for $\sigma^*$ (resp. $\sigma'^*$) in $V(\chi)_\bQ \otimes \bC$ (resp. $V(\chi')_\bQ \otimes \bC$) one obtains the eigenperiod map $\calP'$. Now our claim is clear. By \propositionref{P0P1} generically $\calP'$ has degree $2$ (depending on the labelings of the lines). As a result, the period map $\calP: \calM \rightarrow \calD^{\rho}/\Gamma$ is generically injective. 
\end{proof}

\begin{remark}
Assume the quintic curve $C$ admits a nontrivial automorphism $\sigma$ satisfying $\sigma(L_0) \neq L_0$ and $\sigma(L_1) \neq L_1$. Then the triples $(C,L_0,L_1)$ and $(C, \sigma(L_0), L_1)$ are mapped by the period map $\calP$ to the same point. 
\end{remark}

\begin{remark}
Let $\mathcal W$ be the subset of $|\calO_{\bP^2}(5)| \times |\calO_{\bP^2}(1)| \times |\calO_{\bP^2}(1)|$ corresponding to triples $(C, L_0, L_1)$ with $C+L_0$ and $C+L_1$ admitting at worst $ADE$ singularities and $C \cap L_0 \cap L_1 = \emptyset$. By taking bidouble covers we obtain a family of surfaces $\mathcal S \rightarrow \mathcal W$ with only du Val singularities. By applying a simultaneous resolution to the family $\mathcal S$ we obtain a family $\widetilde{\mathcal S} \rightarrow\mathcal W$ (after a finite base change of $\mathcal W$) of Horikawa surfaces (which are surfaces of general type with $p_g=2$ and $K^2=1$). Consider the period map $\calP_0 \times \calP_1: \mathcal W \rightarrow \calD^0_M/O_{-}(T) \times \calD^0_M/O_{-}(T)$. The generic global Torelli theorem holds for this family. Namely, if two generic points in $\mathcal W$ have the same image in $\calD^0_M/O_{-}(T) \times \calD^0_M/O_{-}(T)$ then the corresponding triples are projectively equivalent. 
\end{remark}

\bibliography{ref}
\end{document}